\tikzset{
    >=stealth',
    punkt/.style={
           rectangle,
           rounded corners,
           draw=black, very thick, 
           text width=8em,
           minimum height=2em,
           text centered, fill=white, drop shadow},
    punkta/.style={
           rectangle,
           rounded corners,
           draw=black, very thick,
           text width=10em,
           minimum height=2em,
           text centered, fill=white, drop shadow},
    punktaka/.style={
           rectangle,
           rounded corners,
           draw=black, very thick,
           text width=14em,
           minimum height=2em,
           text centered, fill=white, drop shadow},       
    punktaa/.style={
           rectangle,
           rounded corners,
           draw=black, very thick,
           text width=15em,
           minimum height=2em,
           text centered, fill=white, drop shadow},
    punktaaa/.style={
           rectangle,
           rounded corners,
           draw=black, very thick,
           text width=10em,
           minimum height=2em,
           text centered, fill=white, drop shadow},
    pil/.style={
           ->,
           thick,
           shorten <=2pt,
           shorten >=2pt,}
}
\definecolor{mycolor}{rgb}{0.122, 0.435, 0.698}
\newcommand{\hawaiiApp}[2] 
{\FPeval{\points}{4-((#1)/2)}
	\begin{tikzpicture}[scale=5,domain=0:1] 
	\tikzstyle{every node}=[circle, draw, fill=black!50,
	inner sep=0pt, minimum width=\points pt]
	\FPeval{\step}{1/2^((#1))} 
	\FPeval{\stepone}{1/2^(((#1))-1)}
	\FPeval{\kminustwo}{((#1))-1}
	\ifthenelse{#2=1}{
		\draw[step=\stepone,gray,very thin] (-0.1,-0.1) grid (1.1,1.1);}{}	
	\draw (0,0)--(1,0)--(1,1)--(0,1)--cycle;
	\draw (0,0.5)--(0.5,0.5)--(0.5,0);
	\draw (0,0.25)--(0.25,0.25)--(0.25,0);
	\draw (0,0.125)--(0.125,0.125)--(0.125,0);
	\draw (0,1/16)--(1/16,1/16)--(1/16,0);
	\draw (0,1/32)--(1/32,1/32)--(1/32,0);
	\draw (0,1/64)--(1/64,1/64)--(1/64,0);
	\foreach \x in {0,\stepone,...,1} {\draw node at (\x,0) {};
		\draw node at (\x,1) {};
		\draw node at (0,\x) {};
		\draw node at (1,\x) {};
	}
	\foreach \x in {0,...,\kminustwo} {
		\pgfmathsetmacro{\m}{2^(\x)*\stepone}													
		\foreach \z in {0,\stepone,...,\m} {
			\draw node at (\z,\m) {} ;
			\draw node at (\m,\z) {} ;
		}
	}
	\ifthenelse{#2=0}{
		\draw node at (\step,\step){};
	}{}
	\end{tikzpicture}}
\newcommand{\hawaiiPol}[1] 
{\FPeval{\points}{4-((#1)/2)}
	\begin{tikzpicture}[scale=5,domain=0:1] 
	\tikzstyle{every node}=[circle, draw, fill=black!50,
	inner sep=0pt, minimum width=\points pt]
	\FPeval{\step}{1/2^((#1))} 
	\FPeval{\stepone}{1/2^(((#1))-1)}
	\FPeval{\kminustwo}{((#1))-1}
	\draw (0,0)--(1,0)--(1,1)--(0,1)--cycle;
	\draw (\stepone,\stepone)--(\stepone,2*\stepone);
	\draw (\stepone,\stepone)--(2*\stepone,\stepone);
	\foreach \x in {0,...,\kminustwo} {
		\pgfmathsetmacro{\m}{2^(\x)*\stepone}	
		\draw (0,\m)--(\m,\m);	
		\draw (\m,0)--(\m,\m);																							
	}
	\draw  [fill=gray, fill opacity=0.85] (\step,\step)--(0,0)--(0,\stepone)--cycle;
	\draw  [fill=gray, fill opacity=0.85] (\step,\step)--(0,0)--(\stepone,0)--cycle;
	\draw  [fill=gray, fill opacity=0.85] (\step,\step)--(0,\stepone)--(\stepone,\stepone)--cycle;
	\draw  [fill=gray, fill opacity=0.85] (\step,\step)--(\stepone,0)--(\stepone,\stepone)--cycle;
	\foreach \x in {0,\stepone,...,1} {\draw node at (\x,0) {};
		\draw node at (\x,1) {};
		\draw node at (0,\x) {};
		\draw node at (1,\x) {};
	}
	\foreach \x in {0,...,\kminustwo} {
		\pgfmathsetmacro{\m}{2^(\x)*\stepone}													
		\foreach \z in {0,\stepone,...,\m} {
			\draw node at (\z,\m) {} ;
			\draw node at (\m,\z) {} ;
		}
	}
	\draw node at (\step,\step){};
	\end{tikzpicture}}
\theoremstyle{plain} 
\newtheorem{teo}{Theorem}
\newtheorem{prop}{Proposition}
\newtheorem{cor}{Corollary}
\theoremstyle{definition}
\newtheorem{preg}{Question}
\theoremstyle{remark}
\newtheorem{obs}{Remark}
\newtheorem{ej}{Example}
\newcommand{\parentesis}[1]{\left(#1\right)}
\newcommand{\comillas}[1]{``#1''}
\newcommand{\punto}[2]{\left(#1,#2\right)}
\newcommand{\conjunto}[1]{\left\lbrace #1 \right\rbrace}
\newcommand{\mapeo}[5]{
\begin{eqnarray*}
#1:#2 & \longrightarrow & #3\\
#4 & \longmapsto & #5
\end{eqnarray*}}
\newcommand{\cms}{$(X,\textrm{d})$ }
\newcommand{\todon}{n\in\mathbb{N}}
\renewcommand{\leq}{\leqslant}
\renewcommand{\epsilon}{\varepsilon}
\newcommand{\ball}[2]{\textrm{B}(#1,#2)}
\newcommand{\dist}[2]{\textrm{d}(#1,#2)}
\newcommand{\diam}{\textrm{diam}}
\newcommand{\subjclass}[2][2010]{%
  \let\@oldtitle\@title%
  \gdef\@title{\@oldtitle\footnotetext{#1 \emph{Mathematics Subject Classification.} #2}}%
}
\newcommand{\keywords}[1]{%
  \let\@@oldtitle\@title%
  \gdef\@title{\@@oldtitle\footnotetext{\emph{Key words and phrases.} #1.}}%
}
\title{Polyhedral expansions of compacta associated to finite approximations}
\author{Diego Mondéjar}
\affil{\small{Departamento de Matemática Aplicada y Estadística\protect\\Universidad San Pablo CEU, Madrid, Spain\protect\\ {\fontfamily{pcr}\selectfont
		diego.mondejarruiz@ceu.es}}}
\date{}                     
\subjclass{54B20, 54C56, 54C60, 54D10, 55P55, 55Q07, 55U05}
\keywords{Finite topological spaces, Alexandroff spaces, Hyperspaces, Shape Theory, Topological Persistence}
\begin{document}
\maketitle
\begin{abstract}
This paper introduces some inverse sequences of different polyhedra all based on finite approximations of a compact metric space so they can be used to capture the shape type of the original space. It is shown that they are HPol-expansions, proving the so-called General Principle. We use these sequences to compute explicitly some inverse persistent homology groups of a space and measure its errors in the approximation process.
\end{abstract}

\section{Introduction}\label{sec:intro}
The rise and development of Computational Topology \cite{EHcomputational, Ztopology}, Applied Algebraic Topology \cite{Gelementary} and Topological Data Analysis \cite{Ctopology} has boosted the study of topology and related fields. The idea of approximating topological spaces with simpler ones is of current interest, due to the possibility of actually compute topological properties of datasets. 

One example of this are the finite topological spaces. They are a special case of the more general Alexandroff spaces, in which the arbitrary intersections of open sets are open. The notion was introduced by Alexandroff in \cite{Adiskrete} but, due to its poor topological structure, their study got a bit stucked. The papers \cite{Sfinite,Msingular} were essential in their rediscovering, since it is shown that Alexandroff $T_0$ spaces can reproduce the same topology as polyhedra. An excellent reference about them are the notes of May \cite{Mfinitetopological,Mfinitecomplexes}. In the last years, due to its computational interest, there is plenty of theory devoted to them. See, for instance, \cite{BMsimple,BMone-point,BMautomorphism} or Barmak's book \cite{Balgebraic}.

Topological persistence \cite{Ctopology, Gbarcodes} uses finite sequences of polyhedra extracted from a dataset to reconstruct topological properties of the underlying space with some guarantees \cite{EHpersistent, ELZtopological, Estability, Vsketches}. These techniques have shown to be very effective and robust to samples with some noise. Moreover, they have been applied successfully to many different branches of science, such as Biology \cite{MunchBiology}, Financial Series \cite{GideaFinancial} or Natural Language Processing \cite{ElyasiNLP}.

An old procedure of approximating topological spaces is by means of inverse sequences of simpler spaces. From the works of Alexandroff \cite{Auntur} and Freudenthal \cite{Freudenthal}, to more modern results \cite{KTWthe, KWfinite, Cinverse, Bilskiinverse, MMreconstruction}, there are a lot of results for concrete types of spaces, both approximate and approximators. Shape Theory essentially uses this idea to enlarge the concept of homotopy of spaces to the ones with bad local properties, where homotopy does not detect any structure. For compact metric spaces, Borsuk \cite{Bconcerning} initiated this theory, using a sequence of neighborhoods of the space embedded in the Hilbert cube (see \cite{Btheory,BtheoryA}) and compare them, instead of the original spaces. After Borsuk, Shape Theory was extended to more general spaces (as Fox \cite{Fon}) and a whole theory was developed \cite{MSshape,DSshape,Mthirty,Mabsolute}. Vanessa Robins was the first to connect Topological Persistence and Shape Theory, using the latter to compute persistent homology groups of some attractors in \cite{Rtowards}. Gathering this idea, in \cite{MCLepsilon}, the authors defined an inductive construction over a compact metric space by terms of finite approximations made from it, called the \emph{Main Construction}. Giving these approximations an hyperspace topology, they conjectured that the inverse sequence of these finite spaces recover some topological properties of the original space and that the Alexandroff-McCord associated inverse sequence of polyhedra recover exactly its shape type, calling the later the \emph{General Principle}. In \cite{MMreconstruction}, it is showed that the inverse limit of the inverse sequence of finite spaces has the homotopy type of the original space. We show here that the inverse sequence of polyhedra (and some others defined in similar terms) actually define the shape of the space, answering positively to the conjecture of the General Principle. This paper is devoted to further connect Shape Theory and Topological Persistence, showing how to use this inverse sequences to compute some properties related to Inverse Persistence, introduced in \cite{MMreconstruction}.

The next section is used to recall the main definitions and tools used in the relationships established in the rest of the paper. In Section \ref{sec:GP}, we introduce some inverse sequences based on the Main Construction over a compact metric space, and called polyhedral approximative sequences, and show that they define the shape of the original space, giving a positive answer to the General Principle in Theorem \ref{teo:GP}. Hence, they are suitable for computing shape invariants, inverse persistence homology groups and inverse persistent homology errors, defined previously in Section \ref{sec:persistenterrors}. We finish giving a computable example in Section \ref{sec:computationalhawaiianearring}, where we explicitly construct a polyhedral approximative sequence of the Hawaiian Earring and compute all the elements defined in the previous section.
\section{Background and tools}\label{sec:tools}
Only essential and used results are mentioned, referring the reader to appropiate references to dig in details.
\paragraph{\textsc{Polyhedra}}
We recall the basic definitons and facts about them here and recommend \cite{Salgebraic,Mfinitecomplexes} and Appendix 1 of \cite{MSshape} for proofs and more information. An \emph{abstract simplicial complex} $K$ is a set of \emph{vertices} $V(K)$ and a set $K$ of non-empty finite subsets of $V(K)$, called \emph{simplices}, satisfying that if $\sigma\in K$ and $\tau\subset\sigma$, then $\tau\in K$, and written $\sigma=\langle v_0,\ldots v_s\rangle$. A \emph{simplicial map} $g:K\rightarrow L$ between simplicial complexes is a function $g:V(K)\rightarrow V(L)$ sending simplices to simplices. We shall use two important complexes here. Given any topological space $X$ and a covering of it $U=\{U_{\alpha}\}_{\alpha\in A}$, we can construct the \emph{nerve} of the covering $\mathcal{N}_U(X)$, whose vertices are the elements of the covering and a finite set of members of the covering $\langle U_0,\ldots,U_s\rangle$ is a $s$-simplex if $U_0\cap\ldots U_s\neq\emptyset$. If we consider the case where $X$ is a metric space, and the covering $B_{\varepsilon}=\{\ball{x}{\varepsilon}:x\in X\}$, for $\varepsilon>0$, the nerve $\check{C}_{\varepsilon}(X)$ of this covering is sometimes called the \emph{\v{C}ech} complex. Given a metric space $X$, we define the \emph{Vietoris} (or \emph{Rips}) complex $\mathcal{V}_{\varepsilon}(X)$, for $\varepsilon>0$, as the simplicial complex having as vertices the points of $X$ and as simplices the finite sets $\langle x_0,\ldots,x_s\rangle$ such that $\diam\{x_0,\ldots,x_s\}<\varepsilon$. Given a simplicial complex $K$, its \emph{geometric realization} $|K|$ is the union of simplices of $K$, as a subspace of $\mathbb{R}^N$, and topologized defining as closed sets, the sets meeting each simplex in a closed subset. If $K$ is finite, then this topology is inherited as a subspace of $\mathbb{R}^N$ and, in this case, $|K|$ becomes a compact metric space. A topological space $X$ is a called a \emph{polyhedron} if there exists a simplicial complex $K$ such that $X=|K|$. If we have a simplicial map $g:K\rightarrow L$, the \emph{realization of the map} $g$ is the continuous map $|g|:|K|\rightarrow |L|$ defined linearly. If $g$ is an isomorphism (that is, a bijection on vertices and simplices) then $|g|$ is an homeomorphism. We say that two continuous maps $f,g:X\rightarrow|K|$ are \emph{contiguous} if, for every $x\in X$, $f(x)\cup g(x)$ belongs to the closure of a simplex $\sigma$ of $K$. This is an importan property since contiguous maps are homotopic. The \emph{barycentric subdivision} $K'$ of a simplicial complex $K$ is the simplicial complex whose vertices are the simplices of $K$ and its simplices are finite chains of simplices $\langle\sigma_0,\ldots,\sigma_s\rangle$ satisfying $\sigma_0\subset\ldots\subset\sigma_s$. We shall use the following facts later.
\begin{prop}
Any simplicial map $\xi:K'\rightarrow K$ sending a vertex $\sigma$ of $K'$ to any vertex of $\sigma$ in $K$ is contiguous (and hence homotopic) to the identity. Any simplicial map $g:K\rightarrow L$ induces a subdivided simplicial map $g':K'\rightarrow L'$, with $g'(\sigma)=g(\sigma)$, whose realization is contiguous (and hence homotopic) to $|g|$.
\end{prop}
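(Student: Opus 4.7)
The plan for the first claim is to use the unique simplex of $K'$ containing a given point and to observe that both the identity and $|\xi|$ land inside the same closed simplex of $K$. Given $x \in |K'|$, let $\tau = \langle \sigma_0, \ldots, \sigma_s\rangle$ be the (unique open) simplex of $K'$ whose interior contains $x$, so that $\sigma_0 \subsetneq \cdots \subsetneq \sigma_s$ is a strictly increasing chain of simplices of $K$. Since each barycenter of $\sigma_i$ lies in $|\sigma_i| \subset |\sigma_s|$, and $|\sigma_s|$ is convex, the point $x$ (a convex combination of these barycenters) satisfies $x \in |\sigma_s|$. On the other hand, $|\xi|(x)$ is by definition the corresponding convex combination of the vertices $\xi(\sigma_0), \ldots, \xi(\sigma_s)$, each of which is a vertex of $\sigma_s$. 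Hence $|\xi|(x) \in |\sigma_s|$ as well, and $\{x, |\xi|(x)\}$ lies in the closure of the simplex $\sigma_s$ of $K$. This is exactly contiguity of $|\xi|$ with the identity, so by the fact recalled in the excerpt, they are homotopic.

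For the second claim, I first need to check that $g'$ is well defined as a simplicial map. Given a simplex $\langle \sigma_0, \ldots, \sigma_s\rangle$ of $K'$ with $\sigma_0 \subsetneq \cdots \subsetneq \sigma_s$, its image is the chain $g(\sigma_0) \subseteq \cdots \subseteq g(\sigma_s)$ of simplices of $L$, which may contain repetitions if $g$ collapses vertices; after discarding duplicates we obtain a strictly increasing chain, hence a simplex of $L'$. Thus $g'$ is a genuine simplicial map $K' \to L'$.

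To compare $|g'|$ with $|g|$, I repeat the argument of the first claim in $L$. For $x \in |K'|$ in the interior of $\langle \sigma_0, \ldots, \sigma_s \rangle$, we have $x \in |\sigma_s|$, so $|g|(x)$ lies in $|g(\sigma_s)|$ (the linear extension of $g$ carries $|\sigma_s|$ into $|g(\sigma_s)|$). On the other hand, $|g'|(x)$ is a convex combination of the barycenters of $g(\sigma_0), \ldots, g(\sigma_s)$, all of which are faces of $g(\sigma_s)$; hence $|g'|(x) \in |g(\sigma_s)|$ as well. Therefore $\{|g|(x), |g'|(x)\}$ lies in the closure of the simplex $g(\sigma_s)$ of $L$, proving contiguity and thus homotopy.

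The only delicate point I foresee is the bookkeeping required to verify that $g'$ remains simplicial when $g$ is not injective on vertices; the rest is a routine verification that the relevant simplex $\sigma_s$ (respectively $g(\sigma_s)$) contains simultaneously the point and its image, using the structure of barycentric subdivision. No step relies on anything beyond the definitions of realization, subdivision, and contiguity already recalled in the paper.
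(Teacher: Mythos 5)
Your argument is correct and complete: the key observation that both $x$ and its image lie in the closed simplex $|\sigma_s|$ (respectively $|g(\sigma_s)|$) spanned by the top element of the chain is exactly the standard proof, and your handling of the non-injective case for $g'$ is the right bookkeeping. Note that the paper itself states this proposition as a recalled background fact without proof, deferring to the cited references (Spanier, May, and Appendix~1 of Marde\v{s}i\'c--Segal), so there is no in-paper argument to compare against; your proof supplies precisely what those references contain.
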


\paragraph{\textsc{Finite spaces}}
See \cite{Mfinitetopological, Mfinitecomplexes} for details. Any point $x$ of an Alexandroff space $X$ has an open \emph{minimal neighborhood}, namely the intersection of all the open sets containing it, and the set of minimal neighborhoods of points of $X$ are a basis for its topology. This minimal basis defines a reflexive and transitive relation on the space $X$: for $x,y\in X$, say $x\leqslant y$ if $B_x\subset B_y$. This relation is a partial order if and only if $X$ is $T_0$. On the other hand, every reflexive and transitive relation on a set $X$ determines an Alexandroff topology, with basis the sets $U_x=\{y\in X:y\leqslant x\}$. So, for every set, its Alexandroff topologies are in bijective correspondence with its reflexive and transitive relations. The topology is $T_0$ if and only if the relation is a partial order (poset). A function $f:X\rightarrow Y$ of Alexandroff spaces is continuous if and only if is order preserving, that is, $x\leqslant y$ implies $f(x)\leqslant f(y)$. McCord \cite{Msingular} showed that there is a bijective correspondence between simplicial complexes and Alexandroff $T_0$ spaces. Given an Alexandroff space space $X$, define $\mathcal{K}(X)$ as the abstract simplicial complex having has vertex set $X$ and as simplices the finite totally ordered subsets $x_0\leqslant\ldots\leqslant x_s$ of the poset $X$. A continuous map $f:X\rightarrow Y$ of Alexandroff spaces defines a simplicial map $\mathcal{K}(f):\mathcal{K}(X)\rightarrow\mathcal{K}(Y)$, since it is order preserving. Now, we can define the following map $\psi=\psi_X:|\mathcal{K}(X)|\rightarrow X$ as follows. Every point $z\in|\mathcal{K}(X)|$ is contained in the interior of a unique simplex $\sigma$ spanned by a strictly increasing finite sequence $x_0<x_1<\ldots<x_s$ of points of $X$. We define $\psi(z)=x_0$, and the following theorem holds.
\begin{teo}[Alexandroff-McCord Correspondence \cite{Msingular}]\label{teo:AMcorrespondence}
	The map $\psi_X$ is a weak homotopy equivalence. Moreover, given a map $f:X\rightarrow Y$ of A-spaces, the induced simplicial map $\mathcal{K}(f)$ makes the following diagram commutative.
	$$\xymatrix@C=2cm@R=1.5cm{
		X\ar[r]^{f}			 						  											& Y\\
		|\mathcal{K}(X)|\ar[u]^{\psi_X}\ar[r]_{|\mathcal{K}(f)|}		    & |\mathcal{K}(Y)|\ar[u]_{\psi_Y}
	}$$
\end{teo}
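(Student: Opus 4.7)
I would handle the two claims separately: the commutativity of the square is a direct unwinding of definitions, and the weak homotopy equivalence reduces, via McCord's local-to-global criterion, to checking contractibility of preimages.

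For commutativity, pick $z \in |\mathcal{K}(X)|$ in the interior of the unique simplex $\sigma = \langle x_0 < x_1 < \cdots < x_s \rangle$ containing it, so $\psi_X(z) = x_0$ and hence $(f \circ \psi_X)(z) = f(x_0)$. Writing $z = \sum t_i x_i$ with each $t_i > 0$, we have $|\mathcal{K}(f)|(z) = \sum t_i f(x_i)$. Continuity of $f$ between A-spaces is equivalent to order preservation, so $f(x_0) \leq \cdots \leq f(x_s)$; collapsing consecutive repetitions produces a strictly increasing chain in $Y$ whose minimum is still $f(x_0)$. By definition $\psi_Y$ sends the resulting convex combination to that minimum, giving $(\psi_Y \circ |\mathcal{K}(f)|)(z) = f(x_0)$, which is commutativity.

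For the weak equivalence I plan to invoke McCord's theorem: a continuous map $p : E \to B$ is a weak equivalence whenever there is a basis-like open cover $\{U_\alpha\}$ of $B$ (one in which every point of $U_\alpha \cap U_\beta$ lies in some $U_\gamma \subset U_\alpha \cap U_\beta$) such that each restriction $p^{-1}(U_\alpha) \to U_\alpha$ is itself a weak equivalence. I apply this to $\psi_X$ with the cover of $X$ by minimal open sets $\{U_x\}_{x \in X}$. The basis-like condition is immediate, since $z \in U_x \cap U_y$ forces $z \leq x$ and $z \leq y$, hence $U_z \subset U_x \cap U_y$. Each $U_x$ is contractible because its maximum $x$ dominates the identity pointwise, yielding the standard order homotopy between $\mathrm{id}_{U_x}$ and the constant map at $x$. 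For the other side, a short check identifies $\psi_X^{-1}(U_x)$ with $\bigcup_{y \leq x} \mathrm{st}(y)$, an open neighborhood in $|\mathcal{K}(X)|$ of the full subcomplex $L_x$ spanned by the vertex set $U_x$; and $L_x$ is the order complex of a poset with maximum $x$, hence a simplicial cone with apex $x$ and therefore contractible.

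The main technical step is constructing the deformation retraction of $\psi_X^{-1}(U_x)$ onto $L_x$: for $z = \sum t_i y_i$ in the interior of a simplex whose vertices split as $y_0 < \cdots < y_k$ in $U_x$ and $y_{k+1} < \cdots < y_s$ outside $U_x$, one renormalizes the first $k+1$ barycentric coefficients and must check that the piecewise-linear formulas so obtained glue continuously across shared simplex faces. The defining hypothesis $y_0 \leq x$ of $\psi_X^{-1}(U_x)$---that the minimum vertex of each contributing simplex lies in $U_x$---is precisely what makes the retraction well-defined and compatible on intersections, and is the subtle point on which the argument hinges.
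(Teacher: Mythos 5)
This theorem is quoted from McCord's paper and the present paper gives no proof of it, so there is nothing internal to compare against; your proposal is, in effect, a reconstruction of McCord's original argument, and it is correct. The commutativity check is exactly the right definition-unwinding: order preservation gives $f(x_0)\leq\cdots\leq f(x_s)$, and after collapsing repetitions the open carrier of $\sum t_i f(x_i)$ still has minimum $f(x_0)$. For the weak equivalence you invoke the correct local-to-global criterion with the minimal-basis cover, and the two local contractibility statements are both right: $U_x$ has a maximum, so the pointwise inequality $\mathrm{id}_{U_x}\leq c_x$ gives a homotopy to a constant; and $\psi_X^{-1}(U_x)=\bigcup_{y\leq x}\mathrm{st}(y)$ is the open star neighborhood of the full subcomplex $L_x$ on the down-set $U_x$ (the identification uses transitivity: some carrier vertex lies in $U_x$ iff the minimal one does), which retracts onto the cone $L_x$ by the standard renormalization. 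The one step you flag as delicate is indeed the standard full-subcomplex neighborhood retraction: continuity holds because every point of the union has strictly positive total barycentric weight on $U_x$-vertices, and the linear formulas agree on shared faces by construction; it would be worth stating that lemma explicitly rather than gesturing at it, but there is no gap in the argument.
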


\paragraph{\textsc{Hyperspaces}} For the study of general hyperspaces, we recommend the paper \cite{Mtopologies} and the book \cite{Nhyperspaces}. Given a topological space $X$, we define the \emph{hyperspace} of $X$ as the set of its non-empty closed subsets $2^X$, endowed with some topology. . For example, if we have a compact metric space $\cms$, the \emph{Hausdorff distance} makes $2^X$ a compact metric space with some nice properties relating $X$ and $2^X$ \cite{MGthe}. The topology for $2^X$ used through this paper is the \emph{upper semifinite topology}, with a basis consisting of the open sets $$B(U)=\conjunto{C\in 2^X: C\subset U}\subset 2^X$$
for every $U$ open in $X$. We write $2^X_u$ for it. In general, this is a non-Hausdorff topology (see \cite{MGupper,MGhomotopical} for more details). Let $\cms$ be a compact metric space. Consider for every $\varepsilon>0$ the subspace of $2^X$ consisting of the closed subsets of $X$
$$U_{\varepsilon}=\conjunto{C\in 2^X:\diam(C)<\varepsilon}.$$ The following result from \cite{MGhomotopical} is key in the use of the upper semifinite topology for hyperspaces in this text.
\begin{prop}
The family $U=\{U_{\varepsilon}\}_{\varepsilon>0}$ is a base of open neighborhoods of the canonical copy of $X$ inside $2^X_u$ (that is, as the inclusion of every point as a singleton).
\end{prop}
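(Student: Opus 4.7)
The statement asserts two things: each $U_\varepsilon$ is an open set of $2^X_u$ containing every singleton, and every open neighborhood of the canonical copy $\{\{x\}:x\in X\}$ contains some $U_\varepsilon$. I would tackle these in that order.

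For the first part, it is immediate that $\{x\}\in U_\varepsilon$ for all $x\in X$ since $\diam(\{x\})=0$. To see that $U_\varepsilon$ is open in $2^X_u$, I would fix $C\in U_\varepsilon$ with $d:=\diam(C)<\varepsilon$, set $\delta=(\varepsilon-d)/3$, and consider the open $\delta$-thickening $U=\{x\in X:\dist{x}{C}<\delta\}\subset X$. A short triangle-inequality check gives $\diam(U)\le d+2\delta<\varepsilon$, so any closed $C'\subset U$ satisfies $\diam(C')<\varepsilon$; this shows $C\in B(U)\subset U_\varepsilon$, and thus $U_\varepsilon$ is open.

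For the second, and main, part I would use a Lebesgue number argument. Let $W$ be any open set of $2^X_u$ with $\{x\}\in W$ for every $x\in X$. Since the basic opens $B(V)$ (with $V\subset X$ open) form a basis, for each $x$ there is an open $V_x\subset X$ with $x\in V_x$ and $B(V_x)\subset W$. The collection $\{V_x\}_{x\in X}$ is an open cover of the compact metric space $X$, so it admits a Lebesgue number $\varepsilon>0$: every subset of $X$ of diameter less than $\varepsilon$ lies inside some $V_x$. Then for any $C\in U_\varepsilon$ we have $\diam(C)<\varepsilon$, hence $C\subset V_x$ for some $x$, i.e.\ $C\in B(V_x)\subset W$. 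Therefore $U_\varepsilon\subset W$, which is exactly the base-of-neighborhoods property.

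The only subtle point I anticipate is verifying openness of $U_\varepsilon$; the Lebesgue-number step is standard, and containment of singletons is trivial. I would be careful with the strictness of the inequality $\diam(C')<\varepsilon$ when choosing $\delta$, which is why I use $\delta=(\varepsilon-d)/3$ rather than $(\varepsilon-d)/2$, so that the estimate $\diam(U)\le d+2\delta=(d+2\varepsilon)/3<\varepsilon$ is strict.
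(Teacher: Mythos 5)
Your argument is correct and complete. The paper itself gives no proof of this proposition---it is quoted from the cited hyperspace reference---so there is nothing internal to compare against; your two-step decomposition (openness of each $U_{\varepsilon}$ via an open metric thickening of $C$, then a Lebesgue-number argument showing every open set containing all singletons contains some $U_{\varepsilon}$) is the standard route, and your use of $\delta=(\varepsilon-d)/3$ to keep the estimate $\diam(U)<\varepsilon$ strict is exactly the right precaution.
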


\paragraph{\textsc{Shape theory}} The approach used here will be the Inverse System, initiated and by Mardesic and Segal for compact Hausdorff spaces in \cite{MSshapes} and developed by them and others. Refer to \cite{MSshape} for details. Let $\mathcal{C}$ be any category and $\Lambda$ be a directed set (called the \emph{index set}). An \emph{inverse system} in $\mathcal{C}$ consists of a triple $\mathbf{X}=\left(X_{\lambda},p_{\lambda\lambda'},\Lambda\right)$, where $X_{\lambda}$ is an object (\emph{term}) of $\mathcal{C}$, for every $\lambda\in\Lambda$, and $p_{\lambda\lambda'}:X_{\lambda'}\rightarrow X_{\lambda}$ is a morphism (\emph{bonding morphisms or maps}) of $\mathcal{C}$, for every pair $\lambda\leqslant\lambda'$ of indices, satisfiying $p_{\lambda\lambda}=id_{X_{\lambda}}$ and $p_{\lambda\lambda'}p_{\lambda'\lambda''}=p_{\lambda\lambda''}$, for every triple $\lambda\leqslant\lambda'\leqslant\lambda''$. If the index set of an inverse system is $\Lambda=\mathbb{N}$, then it is called \emph{inverse sequence}, and it is written $\mathbf{X}=\left\lbrace X_n,p_{nn+1}\right\rbrace$, since the rest of bonding maps are determined by the composition of those. Given two inverse systems $\mathbf{X}=\left(X_{\lambda},p_{\lambda\lambda'},\Lambda\right)$, $\mathbf{Y}=\left(Y_{\mu},q_{\mu\mu'},\Lambda\right)$, a \emph{level morphism of systems} is a collection of morphisms of $\mathcal{C}$, for every $\mu\in M$, $f_{\lambda}:X_{\lambda)}\rightarrow Y_{\lambda}$, for every $\lambda\leqslant\lambda'$, the following diagram is commutative.
$$\xymatrix@C=2cm@R=1.5cm{
	X_{\lambda}\ar[d]_{f_{\lambda}}& X_{\lambda'}\ar[l]\ar[d]^{f_{\lambda'}}\\
	Y_{\lambda} & Y_{\lambda'}\ar[l]}$$
The composition of these morphisms is defined straightforward. The category \emph{pro-$\mathcal{C}$} has objects inverse systems $\mathbf{X}$ (over all directed sets) in $\mathcal{C}$ and morphisms $\mathbf{f}:\mathbf{X}\rightarrow\mathbf{Y}$, equivalence classes of morphisms of systems under an equivalence relation (we do not define the relaation and the more general classes of morphisms because they are not needed here). If $\Lambda'$ is a cofinal subset of $\Lambda$, that is, for every $\lambda'\in\Lambda'$, there is a $\lambda\in\Lambda$ with $\lambda<\lambda'$, we can use $\Lambda'$ instead $\Lambda$ and get an isomorphic inverse system. Next, we state a very useful characterization about isomorphisms in pro-$\mathcal{C}$.
\begin{teo}[Morita's lemma \cite{Mthe}]\label{teo:morita}
A level morphism of systems $$\mathbf{f}:\mathbf{X}=\left(X_{\lambda},p_{\lambda\lambda'},\Lambda\right)\longrightarrow\mathbf{Y}=\left(Y_{\lambda},q_{\lambda\lambda'},\Lambda\right)$$ in pro-$\mathcal{C}$, is an isomorphism if and only if every $\lambda\in\Lambda$ admits a $\lambda'\geqslant\lambda$ and a morphism $g_{\lambda}:Y_{\lambda'}\rightarrow X_{\lambda}$ in $\mathcal{C}$ making the following diagram commutative.
$$\xymatrix@C=2cm@R=1.5cm{
X_{\lambda}\ar[d]_{f_{\lambda}} & X_{\lambda'}\ar[l]\ar[d]^{f_{\lambda'}}\\
Y_{\lambda}								    & Y_{\lambda'}\ar[l]\ar@{-->}[ul]_{g_{\lambda}}}$$
\end{teo}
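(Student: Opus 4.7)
The plan is to prove the two implications separately. Sufficiency is a direct construction, while necessity requires unpacking the definition of equality of morphisms in pro-$\mathcal{C}$.

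For sufficiency, I would assume the family $\{g_\lambda:Y_{\lambda'}\to X_\lambda\}_{\lambda\in\Lambda}$ is given, with $\lambda'=\lambda'(\lambda)\geq\lambda$ satisfying $g_\lambda\circ f_{\lambda'}=p_{\lambda\lambda'}$ and $f_\lambda\circ g_\lambda=q_{\lambda\lambda'}$. The first step is to verify that, possibly after enlarging $\lambda'$, the family assembles into a morphism $\mathbf{g}:\mathbf{Y}\to\mathbf{X}$ of pro-$\mathcal{C}$. The $\lambda$-component of the composition $\mathbf{g}\mathbf{f}$ is then $g_\lambda\circ f_{\lambda'}=p_{\lambda\lambda'}$, which represents $\mathrm{id}_\mathbf{X}$ in pro-$\mathcal{C}$; symmetrically, $f_\lambda\circ g_\lambda=q_{\lambda\lambda'}$ represents $\mathrm{id}_\mathbf{Y}$. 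Hence $\mathbf{f}$ is an isomorphism with $\mathbf{g}$ as inverse.

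For necessity, I would assume $\mathbf{f}$ has an inverse $\mathbf{g}$ in pro-$\mathcal{C}$. For each $\lambda$, choose a representative of $\mathbf{g}$: an index $\lambda_0=\lambda_0(\lambda)\geq\lambda$ and a morphism $h_\lambda:Y_{\lambda_0}\to X_\lambda$. The identity $\mathbf{g}\mathbf{f}=\mathrm{id}_\mathbf{X}$ means that the families $\{h_\lambda\circ f_{\lambda_0}\}$ and $\{p_{\lambda\lambda_0}\}$ are equivalent as representatives, so there exists $\lambda_1\geq\lambda_0$ such that $h_\lambda\circ f_{\lambda_0}\circ p_{\lambda_0\lambda_1}=p_{\lambda\lambda_1}$ holds strictly in $\mathcal{C}$. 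Analogously, $\mathbf{f}\mathbf{g}=\mathrm{id}_\mathbf{Y}$ yields a $\lambda_2\geq\lambda_0$ with $f_\lambda\circ h_\lambda\circ q_{\lambda_0\lambda_2}=q_{\lambda\lambda_2}$. Setting $\lambda'\geq\max\{\lambda_1,\lambda_2\}$ and defining $g_\lambda:=h_\lambda\circ q_{\lambda_0\lambda'}:Y_{\lambda'}\to X_\lambda$, the level-morphism commutativity $q_{\lambda_0\lambda'}\circ f_{\lambda'}=f_{\lambda_0}\circ p_{\lambda_0\lambda'}$ combined with the two identities above makes both triangles of the diagram commute.

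The main obstacle lies in the necessity direction: a morphism of pro-$\mathcal{C}$ is an equivalence class of families, so the identities $\mathbf{g}\mathbf{f}=\mathrm{id}_\mathbf{X}$ and $\mathbf{f}\mathbf{g}=\mathrm{id}_\mathbf{Y}$ do not translate into strict commutativity at a fixed index, but only after passing to sufficiently large indices. The technical point is to perform this passage simultaneously for both triangles and coherently in $\lambda$; the choice $\lambda'\geq\max\{\lambda_1,\lambda_2\}$ and the composition $h_\lambda\circ q_{\lambda_0\lambda'}$ is precisely what resolves this ambiguity in a single step.
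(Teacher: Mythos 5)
The paper offers no proof of this statement: Morita's lemma is quoted from \cite{Mthe} as a background tool in Section \ref{sec:tools}, so there is no in-paper argument to compare yours against; I can only assess your proposal on its own terms. It follows the standard route and is essentially correct. The necessity direction is complete as written: choosing a representative $(h_\lambda,\lambda_0)$ of the inverse $\mathbf{g}$, extracting $\lambda_1,\lambda_2$ from the two identities $\mathbf{g}\mathbf{f}=\mathrm{id}_{\mathbf{X}}$ and $\mathbf{f}\mathbf{g}=\mathrm{id}_{\mathbf{Y}}$, and setting $g_\lambda=h_\lambda\circ q_{\lambda_0\lambda'}$ with $\lambda'\geq\lambda_1,\lambda_2$ does make both triangles commute, using the level condition $q_{\lambda_0\lambda'}\circ f_{\lambda'}=f_{\lambda_0}\circ p_{\lambda_0\lambda'}$ exactly as you indicate. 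The one place you wave your hands is the opening step of sufficiency: you assert that the family $\{g_\lambda\}$ ``assembles into a morphism $\mathbf{Y}\to\mathbf{X}$,'' but the compatibility condition defining a morphism of systems is not automatic, and it is the only nontrivial verification in that direction --- without it the composite $\mathbf{g}\mathbf{f}$ you then compute is not yet defined. The check does go through, and the trick is worth recording: for $\lambda_1\leqslant\lambda_2$, write $\mu_i=\lambda'(\lambda_i)$, pick $\nu\geqslant\mu_1,\mu_2$, and set $a=g_{\lambda_1}\circ q_{\mu_1\nu}$ and $b=p_{\lambda_1\lambda_2}\circ g_{\lambda_2}\circ q_{\mu_2\nu}$; precomposing each with $f_\nu$ and using the hypothesis $g_{\lambda_i}\circ f_{\mu_i}=p_{\lambda_i\mu_i}$ together with the level condition gives $a\circ f_\nu=p_{\lambda_1\nu}=b\circ f_\nu$, and then precomposing instead with $q_{\nu\,\lambda'(\nu)}=f_\nu\circ g_\nu$ yields $a\circ q_{\nu\,\lambda'(\nu)}=b\circ q_{\nu\,\lambda'(\nu)}$, which is the required compatibility at the index $\lambda'(\nu)$. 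With that paragraph inserted, the proof is complete.
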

Let $\mathcal{T}$ be a category and $\mathcal{P}$ a subcategory of $\mathcal{T}$. Let $X$ be an object of $\mathcal{T}$, a \emph{$\mathcal{T}$-expansion} of $X$ is a morphism  in pro-$\mathcal{T}$ $\mathbf{p}:X\rightarrow\mathbf{X}$ (consider $X$ as an inverse system in which every term is $X$ and the bonding maps are the identity) to an inverse system $\mathbf{X}=(X_{\lambda},p_{\lambda\lambda'},\Lambda)$ in $\mathcal{T}$ satisfying a universal condition (not included). Moreover, $\mathbf{p}$ is called a \emph{$\mathcal{P}$-expansion} of $X$ whenever $\mathbf{X}$ and $\mathbf{f}$ are in pro-$\mathcal{P}$. Every two expansions of the same space are isomorphic. Also, a composition of an expansion with an isomophism is again an expansion. Given a category $\mathcal{T}$ and a subcategory $\mathcal{P}$, we say that $\mathcal{P}$ is \emph{dense} in $\mathcal{T}$ if every object of $\mathcal{T}$ admits a $\mathcal{P}$-expansion, under an equivalence (also not included). Define the \emph{shape category} $Sh$ for $(\mathcal{T},\mathcal{P})$ as the one having as objects the objects of $\mathcal{T}$, and, for $X,Y\in\mathcal{T}$, the morphisms $X\rightarrow Y$ are the equivalence classes of morphisms $\mathbf{f}:\mathbf{X}\rightarrow\mathbf{Y}$ in pro-$\mathcal{P}$. Usually, the shape category is used for $(\mathcal{T}=HTop,\mathcal{P}=HPol)$. Shape is an extension of homotopy, so two spaces homotopically equivalent must have the same shape. Two isomorphic spaces $X,Y$ in $Sh$ are said to have the \emph{same shape (type)}, written $Sh(X)=Sh(Y)$. There are some well known HPol-expansions defined for any topological space (such as the Vietoris or the \v{C}ech expansions). Inverse limits are a good source of HPol-expansions, since they and inverse system considered in HPol is an HPol-expansion of its inverse limit. The Shape category has its own invariants, defined through the HPol-expansions and its inverse limits. For example, for every abelian group $G$, we can consider the $k$-th homology group $H_k(X_{\lambda};G)$ of each term and the induced homology maps $H_k(p_{\lambda\lambda'};G)$ of the bonding maps. Then we obtain an inverse system of abelian groups $$H_k(\mathbf{X};G)=\left(H_k(X_{\lambda};G),H_k(p_{\lambda\lambda'};G),\Lambda\right),$$ called the \emph{$k$-th homology pro-group} of $X$. We define the \emph{$k$-th \v{C}ech homology group} of $X$ as the inverse limit of this inverse system of groups, $$\check{H}_k(X)=\lim_{\leftarrow}H_k(\mathbf{X};G).$$ Similarly, we can obtain the \emph{$K$-th shape group} of $X$, as the inverse limit of the corresponding homotopy pro-groups $\pi_k(\mathbf{X})$. It is known that the \v{C}ech homology and shape groups are well defined, that is, they do not depend on the HPol-expansion we use to compute them. Movability is another invariant, trying to generalize the concept of spaces having the shape of ANRs. We can define movability for arbitrary inverse systems. An inverse system $\mathbf{X}=(X_{\lambda},p_{\lambda\lambda'},\Lambda)$ in pro-$\mathcal{C}$ is \emph{movable} provided every $\lambda\in\Lambda$ admits $\lambda'\geqslant\lambda$, called \emph{movability index} of $\lambda$, such that, for every $\lambda''\geqslant\lambda$, there exists a morphism $r:X_{\lambda'}\rightarrow X_{\lambda''}$ of $\mathcal{C}$ such that $p_{\lambda\lambda'}=p_{\lambda\lambda''} r$. A topological space $X$ is \emph{movable} if it has movable HPol-expansions. An inverse system $\mathbf{X}$ in pro-$\mathcal{C}$ is \emph{stable} provided it is isomorphic in pro-$\mathcal{C}$ to an object $X\in\mathcal{C}$. It is evident that if $\mathbf{X}$ is stable, then it is movable. A topological space is said to be \emph{stable} provided its HPol-expansions are stable. It is equivalent to have the same shape as a polyhedron (or ANR) and it is obviously a shape invariant property. Then, an stable space is movable, but the converse is not always true. An inverse system of groups $\mathbf{G}=(G_{\lambda},p_{\lambda\lambda'},\Lambda)$ has the \emph{Mittag-Leffler} (\textsc{ml}) property if every $\lambda\in\Lambda$ admits a $\lambda'\geqslant\lambda$ (called an \emph{\textsc{ml} index} for $\lambda$) such that, for every $\lambda''\geqslant\lambda'$, we have $p_{\lambda\lambda''}(X_{\lambda''})=p_{\lambda\lambda'}(X_{\lambda})$. Using that every movable inverse system of groups has the Mittag-Leffler property and that every functor preserves the property of being movable, we have that the homology and homotopy pro-groups $H_k(\mathbf{X};G)$, $\pi_k(\mathbf{X})$ are movable and hence have the Mittag-Leffler property.

\paragraph{\textsc{The Main Construction}}
We recall the most important tool of this text, the \emph{Main Construction}, introduced in \cite{MCLepsilon} and reformulated in \cite{MMreconstruction}. A sequence of finite approximations for a compact metric space $X$, with some special properties, is constructed. Let $\cms$ be a compact metric space. For every $\varepsilon>0$, we can find a finite \emph{$\varepsilon$-approximation} $A\subset X$, that is, for every point $x\in X$, there is a point $a\in A$ with $\dist{x,a}<\varepsilon$. An \emph{adjusted approximative sequence} $\left\lbrace\varepsilon_n,A_n\right\rbrace$ consists of a decreasing and tending to zero sequence of positive real numbers $\{\varepsilon_n\}$ and a sequence of finite spaces $\{A_n\}$ such that, for every $\todon$, $A_n$ is an $\varepsilon_n$-approximation of $X$ and $\varepsilon_{n+1}$ is adjusted to it, meaning that, for every $\todon$, $\varepsilon_{n+1}<\frac{\varepsilon_{n}-\gamma_{n}}{2}$, with \[\gamma_n=\text{sup}\{\text{d}(x,A_n):x\in X\}<\varepsilon_n.\] The Main Construction in \cite{MCLepsilon} and Theorem 3 in \cite{MMreconstruction} states the existence of adjusted approximative sequences for every compact metric space. Given an adjusted approximative sequence, we can define a \emph{nearby approximative sequence} $\{q_{A_n}\}$ as the sequence of continuous maps $q_{A_n}:X\rightarrow2^{A_n}_u$, defined by proximity, \[q_{A_n}(x)=\left\lbrace a\in A:\text{d}(x,A_n)=\text{d}(x,a)\right\rbrace,\] and a \emph{finite approximative sequence} (\textsc{fas} for short) $\left\lbrace U_{2\varepsilon_n}(A_{n}),p_{n,n+1}\right\rbrace$, as an inverse sequence with terms \[U_{2\varepsilon_n}(A_{n}=\conjunto{C\in 2^{A_n}_u:\diam(C)<\varepsilon_n}\] and continuous maps \[p_{n,n+1}:U_{2\varepsilon_{n+1}}(A_{n+1})\longrightarrow U_{2\varepsilon_{n}}(A_{n})\] with $p_{n,n+1}=q_{A_n}(C)$, for every closed set $C\in U_{n+1}(A_{n+1})$. Note that the upper semifinite topology in the hyperspace of the discrete space $U_{2\varepsilon_{n}}(A_{n}$ is a finite space with the relation $\subset$ as poset. 
In \cite{MMreconstruction} it is shown that the inverse limit of every \textsc{fas} has the homotopy type of the original space, since it contains an homeomorphic copy of it which is an strong deformation retract of this limit. The General Principle of \cite{MCLepsilon} states that the associated Alexandroff-McCord sequence of polyhedra $\left\lbrace |\mathcal{K}U_{2\varepsilon_n}(A_{n})|,|\mathcal{K}(p_{n,n+1})|\right\rbrace$ (using the correspondence of Theorem \ref{teo:AMcorrespondence}) is able to capture the shape properties of $X$. We give a positive answer in Section \ref{sec:GP} showing that this inverse sequence is indeed an HPol-expansion for $X$ and we generalize the General Principle giving some other HPol-expansions of $X$ using the Main Construction. 

\section{Polyhedral approximative sequences and the General Principle}\label{sec:GP}
In this section we show that some inverse sequences of polyhedra based on the Main Construction over a compact metric space are Hpol-expansions and hence they provide all the shape information about the space. These sequences are also useful for inverse persistence computations.

Consider an adjusted sequence $\{\varepsilon_n A_n\}$ and a \textsc{fas} $\left\lbrace U_{2\varepsilon_n}(A_{n}),p_{n,n+1}\right\rbrace$. Using the Alexandroff-McCord correspondence (Theorem \ref{teo:AMcorrespondence}) we have that, for every $n\in\mathbb{N}$ and finite $\textrm{T}_0$ space $U_{2\varepsilon_n}(A_n)$, there exists a simplicial complex $\mathcal{K}(U_{2\varepsilon_n}(A_n))$ with vertex set the points $D\in U_{2\varepsilon_n}(A_n)$ and simplexes $\langle D_0,D_1,\ldots,D_s\rangle$ with $D_0\subset D_1\subset\ldots\subset D_s$ such that there is a weak homotopy equivalence between the finite space and the geometric realization of the simplicial complex $$f_n:|\mathcal{K}(U_{2\varepsilon_n}(A_n))|\longrightarrow U_{2\varepsilon_n}(A_n),$$ defined as follows. Every point $x\in|\mathcal{K}(U_{2\epsilon_n}(A_n))|$ is contained in the interior of a unique simplex $\sigma=\langle D_0,D_1,\ldots,D_s\rangle$ and $f_n(x)=D_0$. We also have simplicial maps (following McCords paper's notation we should write $\mathcal{K}(p_{n,n+1})$ for the simplicial maps but we will omit this notation, using the same as for the maps between the finite spaces, $p_{n,n+1}$, for the sake of simplicity) between the polyhedra, defined on the vertices and extended as usual to simplices
\begin{eqnarray*}
	p_{n,n+1}:\mathcal{K}(U_{2\epsilon_{n+1}}(A_{n+1})) & \longrightarrow & \mathcal{K}(U_{2\epsilon_n}(A_n)) \\
	D & \longmapsto & p_{n,n+1}(D) \\
	\langle D_0, D_1, \ldots, D_s\rangle & \longmapsto & \langle p_{n,n+1}(D_0), p_{n,n+1}(D_1), \ldots, p_{n,n+1}(D_s)\rangle
\end{eqnarray*} 
where, if $$D_0\subset D_1\subset\ldots\subset D_s,$$ then $$p_{n,n+1}(D_0)\subset p_{n,n+1}(D_1)\subset\ldots\subset p_{n,n+1}(D_s).$$ The realizations of these simplicial maps satisfy that, for every $n\in\mathbb{N}$, the diagram 
$$\xymatrix@C=2cm@R=1.5cm{|\mathcal{K}(U_{2\varepsilon_n}(A_n))|\ar[d]_{f_n} & |\mathcal{K}(U_{2\varepsilon_{n+1}}(A_{n+1}))|\ar[d]^{f_{n+1}}\ar[l]_{|p_{n,n+1}|}\\
	U_{2\varepsilon_n}(A_n) & U_{2\varepsilon_{n+1}}(A_{n+1})\ar[l]^{p_{n,n+1}}}$$ 
commutes. So we obtain an inverse sequence of polyhedra and a map (actually a level map) between the inverse sequences of finite spaces and polyhedra.
\begingroup\makeatletter\def\f@size{10}\check@mathfonts
$$\xymatrix{|\mathcal{K}(U_{2\varepsilon_1}(A_1))|\ar[d]_{f_1} & |\mathcal{K}(U_{2\varepsilon_{2}}(A_{2}))|\ar[l]_{|p_{1,2}|}\ar[d]_{f_2}&\ldots\ar[l]&|\mathcal{K}(U_{2\varepsilon_n}(A_n))|\ar[d]_{f_n}\ar[l] & |\mathcal{K}(U_{2\varepsilon_{n+1}}(A_{n+1}))|\ar[d]^{f_{n+1}}\ar[l]_{|p_{n,n+1}|}&\ldots\ar[l]\\
	U_{2\varepsilon_1}(A_1) & U_{2\varepsilon_{2}}(A_{2})\ar[l]^{p_{1,2}}&\ldots\ar[l]&U_{2\varepsilon_n}(A_n)\ar[l] & U_{2\varepsilon_{n+1}}(A_{n+1})\ar[l]^{p_{n,n+1}}&\ldots\ar[l]}$$ 
\endgroup
The analysis of these two inverse sequences, their limits and their relations with the original space are proposed in \cite{MCLepsilon}. We will show here that this and other related polyhedral inverse sequences are HPol-expansions of the space $X$.

\paragraph{The Alexandroff-McCord Approximative Sequence}
We begin by considering the inverse sequence of polyhedra mentioned above, $$\mathcal{K}(X)=\{|\mathcal{K}(U_{2\varepsilon_n}(A_n))|, |p_{n,n+1}|\},$$ which will be called the \emph{Alexandroff-McCord Approximative Sequence}. We show that this inverse sequence defines the shape of $X$. It is closely related with a well known HPol-expansion that uses the Vietoris-Rips complexes. The \emph{Vietoris-Rips complex} $\mathcal{R}_{\varepsilon}(X)$ of a compact metric space $(X,\textrm{d})$ for $\varepsilon>0$ is the simplicial complex with vertex set $X$ and a $q$-simplex is a subset $\left\langle x_0,\ldots, x_q\right\rangle\subset X$ such that $\diam\{x_0,\ldots, x_q\}<\varepsilon$. As stated in Corollary 7 of \cite{MCLepsilon}, for a finite metric space $(A,\textrm{d})$ and $\varepsilon>0$, we have $\mathcal{K}(U_{\varepsilon}(A))=\mathcal{R}'_{\varepsilon}(A)$.
\begin{prop}\label{teo:mccordexpansion}
The Alexandroff-McCord Aproximative Sequence $\mathcal{K}(X)$ is an HPol-expansion of $X$.
\end{prop}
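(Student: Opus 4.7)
The plan is to exhibit structure maps $p_n\colon X \to |\mathcal{K}(U_{2\varepsilon_n}(A_n))|$ making $\mathbf{p}=(p_n)$ an HPol-expansion. First I would rephrase each term as a Vietoris--Rips polyhedron via Corollary 7 of \cite{MCLepsilon}, which identifies $\mathcal{K}(U_{2\varepsilon_n}(A_n))$ with $\mathcal{R}'_{2\varepsilon_n}(A_n)$, so the canonical homeomorphism between a complex and its barycentric subdivision yields $|\mathcal{K}(U_{2\varepsilon_n}(A_n))|\cong|\mathcal{R}_{2\varepsilon_n}(A_n)|$. This converts $\mathcal{K}(X)$ into an inverse sequence of Vietoris--Rips polyhedra attached to the finite approximations $A_n$, a format closer to the classical HPol-expansions referenced in Section \ref{sec:tools}.

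For the definition of $p_n$, I would take a partition of unity $\{\varphi_a\}_{a\in A_n}$ subordinate to the open cover $\{\ball{a}{\varepsilon_n}\}_{a\in A_n}$ of $X$ (which is a cover because $A_n$ is an $\varepsilon_n$-approximation) and set $p_n(x)=\sum_{a\in A_n}\varphi_a(x)\,a$, viewed as a point of $|\mathcal{R}_{2\varepsilon_n}(A_n)|$. This is well defined because any two centres $a,a'$ with $\varphi_a(x)\varphi_{a'}(x)>0$ satisfy $\dist{a}{a'}<2\varepsilon_n$ by the triangle inequality, so they span a simplex of $\mathcal{R}_{2\varepsilon_n}(A_n)$. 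To check that $(p_n)$ is a level morphism in pro-HPol, I would use a contiguity argument: for every $x\in X$ the vertices carrying $p_n(x)$ and the images $p_{n,n+1}(a)$ of those carrying $p_{n+1}(x)$ lie in a common simplex of $\mathcal{R}_{2\varepsilon_n}(A_n)$ thanks to the adjustment inequality $\varepsilon_{n+1}<(\varepsilon_n-\gamma_n)/2$, so the contiguous-maps proposition recalled in Section \ref{sec:tools} yields $|p_{n,n+1}|\circ p_{n+1}\simeq p_n$.

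Finally, to upgrade $(p_n)$ to an HPol-expansion I would invoke Morita's lemma (Theorem \ref{teo:morita}) by comparing $\mathcal{K}(X)$ with a well-known HPol-expansion of $X$, for instance the \v{C}ech-type expansion built from the coverings $B_{\varepsilon_n}=\{\ball{a}{\varepsilon_n}\}_{a\in A_n}$, whose nerves $\mathcal{N}_{B_{\varepsilon_n}}(X)$ map simplicially into $\mathcal{R}_{2\varepsilon_n}(A_n)$ by identifying each ball with its centre; the commutative squares required by Morita would follow by contiguity from the inequalities relating $\varepsilon_n$, $\gamma_n$, $\varepsilon_{n+1}$. I expect the main obstacle to be precisely this Morita verification at the simplicial level: for each $n$ one must produce $n'\geq n$ and a simplicial map back to $|\mathcal{K}(U_{2\varepsilon_n}(A_n))|$ closing both triangles up to homotopy, and checking that the diameter control forces the natural "nearest vertex in $A_n$" choice to be well defined on simplices and contiguous on the composites is the point where the adjustment condition on $\{\varepsilon_n\}$ must do the real work.
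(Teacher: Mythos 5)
Your strategy is sound and reaches the same destination, but by a genuinely different route. The paper never constructs maps $X\to|\mathcal{K}(U_{2\varepsilon_n}(A_n))|$ at all: after the same identification $\mathcal{K}(U_{2\varepsilon_n}(A_n))=\mathcal{R}'_{2\varepsilon_n}(A_n)$ that you use, it interposes the sequence $\mathcal{M}(X)=\{|\mathcal{R}_{2\varepsilon_n}(A_n)|,|p^*_{n,n+1}|\}$, shows $\mathcal{K}(X)\cong\mathcal{M}(X)$ via the last-vertex map $\rho\colon K'\to K$, and then proves $\mathcal{M}(X)\cong\mathcal{V}(X)=\{|\mathcal{R}_{2\varepsilon_n}(X)|\}$ (the Vietoris system, a known HPol-expansion) by Morita's lemma, with inclusions $A_n\hookrightarrow X$ one way and nearest-point maps $x\mapsto a\in q_{A_n}(x)$ as the diagonal; the conclusion follows because a composition of an expansion with an isomorphism is again an expansion. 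You instead compare with the \v{C}ech expansion restricted to the ball coverings $B_{\varepsilon_n}$, which works but requires one observation you omit: that $\{B_{\varepsilon_n}\}_{n}$ is cofinal in the directed set of all open coverings of $X$ (this holds by compactness and $\varepsilon_n\to 0$, and is proved in the paper for the \v{C}ech approximative sequence). Your partition-of-unity maps $p_n$ are correct and the contiguity estimate $\varepsilon_{n+1}+\gamma_n<\varepsilon_n$ does make them a level morphism, but they are superfluous for the isomorphism route — their only role would be in a direct verification of the universal property, which you do not attempt. Finally, be aware that the Morita verification you defer is the actual content of the proof, not a formality: one must check that the diagonal $a\mapsto b\in q_{A_n}(a)$ (or $x\mapsto a\in q_{A_n}(x)$ in the paper's version) is simplicial and well defined up to contiguity, and that both triangles close; all of this does go through, driven exactly by the estimate $2\gamma_n+2\varepsilon_{n+1}<2\varepsilon_n$ coming from the adjustment condition, e.g. a Rips simplex $\langle a_0,\ldots,a_s\rangle$ of $\mathcal{R}_{2\varepsilon_{n+1}}(A_{n+1})$ maps to balls $B_n(b_i)$ all containing $a_0$ since $\dist{a_0}{b_i}<2\varepsilon_{n+1}+\gamma_n<\varepsilon_n$. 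What your route buys is a more geometric picture (explicit canonical maps from $X$); what the paper's route buys is economy, since the Vietoris system requires no cofinality argument and no maps out of $X$.
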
 
\begin{proof}
We shall show that $\mathcal{K}(X)$ is isomorphic to the Vietoris system $$\mathcal{V}(X)=\{|\mathcal{R}_{\varepsilon}(X)|,|i_{\varepsilon,\varepsilon'}|,\mathbb{R}\},$$ where, for every $\varepsilon\leqslant\varepsilon'$, $$i_{\epsilon,\epsilon'}:|\mathcal{R}_{\varepsilon'}(X)|\longrightarrow|\mathcal{R}_{\varepsilon}(X)|$$ is just the simplicial inclusion. This is a well known HPol-expansion of $X$ (see \cite{MSshape}) but, instead, let us consider the cofinal (and hence isomorphic) sequence (keeping the notation for the sake of simplicity) $$\mathcal{V}(X)=\{|\mathcal{R}_{2\epsilon_n}(X)|,|i_{2\epsilon_n,2\epsilon_{n+1}}|\}.$$ 
We define an intermediate sequence in order to set this isomorphism. Consider, for all $n\in\mathbb{N}$, the map
\begin{eqnarray*}
p^*_{n,n+1}:\mathcal{R}_{2\epsilon_{n+1}}(A_{n+1}) & \longrightarrow & \mathcal{R}_{2\epsilon_n}(A_n)\\
a & \longmapsto & b\in q_{A_n}(a)=p_{n,n+1}(\{a\}),
\end{eqnarray*}
where $b$ is any of the elements of $q_{A_n}(a)$. It is a simplicial map since a simplex $\langle a_0,a_1,\ldots,a_s\rangle\in\mathcal{R}_{2\epsilon_{n+1}}(A_{n+1})$ has image $\langle b_0,b_1,\ldots,b_s\rangle$ with $b_i\in q_{A_n}(a_i)$ for $i=0,1,\ldots,s$, which is a simplex of $\mathcal{R}_{2\epsilon_n}(A_n)$, because, for every $i,j\in\{0,1,\ldots,s\}$, \[\textrm{d}(b_i,b_j)\leq\textrm{d}(b_i,a_i)+\textrm{d}(a_i,a_j)+\textrm{d}(a_j,b_j)<2\gamma_n+2\epsilon_{n+1}<2\epsilon_n.\] The realization of this map on the corresponding polyhedra is well defined up to homotopy type (we do not define a map, but a homotopy class of maps) because for $b,b'\in q_{A_n}(a)$, with $a\in A_{n+1}$, we have $$\textrm{d}(b,b')\leqslant \textrm{d}(b,a)+\textrm{d}(a,b')<2\varepsilon_n.$$ 
We can define inductively the inverse HPol-sequence of polyhedra $$\mathcal{M}(X)=\{|\mathcal{R}_{2\epsilon_n}(A_n)|,|p^*_{n,n+1}|\}_{\mathbb{N}}.$$ 
We next show $\mathcal{M}(X)$ is isomorphic to $\mathcal{K}(X)$ using the canonical simplicial map $\rho:K'\rightarrow K$, for a simplicial complex $K$, defined as $\rho(\{x_0,x_1,\ldots,x_s\})=x_s$ for a vertex $\{x_0,x_1,\ldots,x_s\}$ of $K'$ and sending a simplex $\langle\{x_0\},\{x_0,x_1\},\ldots,\{x_0,x_1,\ldots,x_s\}\rangle$ of $K'$ to the simplex $\langle x_0,x_1,\ldots,x_s\rangle$. The realization of this map $|\rho|:|K'|=|K|\rightarrow |K|$ is homotopic to the identity (we could have defined the simplicial map choosing any vertex as image and the realization would have the same homotopic properties, see \cite{Mfinitecomplexes}). This map induces componentwise a morphism of systems $\rho:\mathcal{K}(X)\rightarrow \mathcal{M}(X)$ through $\rho_n:\mathcal{R}'_{2\epsilon_n}(A_n)\rightarrow \mathcal{R}_{2\epsilon_n}(A_n)$ because, for every $n\in\mathbb{N}$, the following diagram is commutative up to homotopy.
$$\xymatrix @C=2cm@R=1.5cm{|\mathcal{R}'_{2\epsilon_n}(A_n)|\ar[d]_{|\rho_n|} & |\mathcal{R}'_{2\epsilon_{n+1}}(A_{n+1})|\ar[l]_{|p_{n,n+1}|}\ar[d]^{|\rho_{n+1}|}\\
 |\mathcal{R}_{2\epsilon_n}(A_n)| & |\mathcal{R}_{2\epsilon_{n+1}}(A_{n+1})|\ar[l]^{|p^*_{n,n+1}|}}$$ 
If $x\in|\mathcal{R}'_{2\epsilon_{n+1}}(A_{n+1})|$, then it belongs to some simplex $\sigma=\langle D_0,D_1,\ldots,D_s\rangle\in\mathcal{R}'_{2\epsilon_{n+1}}(A_{n+1})$. We write
$$D_s=\underbrace{\underbrace{\underbrace{a_0^0,a_1^0,\ldots,a^0_{r_0}}_{D_0},a^1_0,a^1_1,\ldots,a^1_{r_1}}_{D_1},\ldots}_{D_{s-1}},a_0^s,a_1^s,\ldots,a_{r_s}^s,$$
$$p_{n,n+1}(D_s)=\underbrace{\underbrace{\underbrace{q_{A_n}(a_0^0)\cup\ldots\cup q_{A_n}(a^0_{r_0})}_{p_{n,n+1}(D_0)}\cup q_{A_n}(a^1_0)\cup \ldots\cup q_{A_n}(a^1_{r_1})}_{p_{n,n+1}(D_1)}\cup \ldots}_{p_{n,n+1}(D_{s-1})}\cup q_{A_n}(a_0^s)\cup \ldots\cup q_{A_n}(a_{r_s}^s),$$
where $q_{A_n}(a_{r_i}^i)=\{b_0,b_1,\ldots b_{t_i}\}$ for $i=0,1,\ldots,s$. Then,
\begin{eqnarray*}
 & \langle D_0,D_1,\ldots,D_s\rangle & \xmapsto{\enspace p_{n,n+1}\enspace}\langle p_{n,n+1}(D_0),p_{n,n+1}(D_1),\ldots,p_{n,n+1}(D_s)\rangle\xmapsto{\enspace\rho_n\enspace}\langle b_{t_0}^0,b_{t_1}^1,\ldots,b_{t_s}^s\rangle=\sigma_1,\\
 & \langle D_0,D_1,\ldots,D_s\rangle & \xmapsto{\enspace\rho_{n+1}\enspace}\langle a_{r_0}^0,a_{r_1}^1,\ldots,a_{r_s}^s\rangle\xmapsto{\enspace p_{n,n+1}^*\enspace}\langle b_{j_0}^0,b_{j_1}^1,\ldots,b_{j_s}^s\rangle=\sigma_2, 
\end{eqnarray*}
for $j_i\in\{0,1,\ldots,t_i\}$ with $i=0,1,\ldots,s$. But $\sigma_1$ and $\sigma_2$ lie in a common simplex $\sigma_1\cup\sigma_2=\langle b_{t_0}^0,b_{t_1}^1,\ldots,b_{t_s}^s,b_{j_0}^0,b_{j_1}^1,\ldots,b_{j_s}^s\rangle$ of $\mathcal{R}'_{2\epsilon_n}(A_n)$ because, for any pair $u,v\in\{0,1,\ldots,s\}$, it is satisfied that
\[\textrm{d}(b^u_{t_u},b^v_{j_v})\leq\textrm{d}(b^u_{t_u},a_{r_u}^u)+\textrm{d}(a_{r_u}^u,a_{r_v}^v)+\textrm{d}(a^v_{r_v},b^v_{j_v})\leq 2\gamma_n+2\varepsilon_{n+1}<2\epsilon_n.\]
So, $\rho:\mathcal{K}(X)\rightarrow \mathcal{M}(X)$ is a morphism of systems. Moreover, $\rho$ is equivalent to the identity as morphism of systems, since the equivalence relation (page 6 of \cite{MSshape}) is trivially satisfied because, for all $n\in\mathbb{N}$, $$\rho,\textrm{id}:|\mathcal{R}'_{2\epsilon_n}(A_n)|\longrightarrow|\mathcal{R}_{2\epsilon_n}(A_n)|$$
are homotopic maps, as we pointed out. Then $\mathcal{K}(X)$ and $\mathcal{M}(X)$ are isomorphic.

It just remains to prove that the inverse systems $\mathcal{V}(X)$ and $\mathcal{M}(X)$ are isomorphic. Both systems deal with Vietoris Rips complexes (not barycentric subdivisions), so we consider, for each $n\in\mathbb{N}$, the obvious inclusion $j_n:\mathcal{R}_{2\epsilon_n}(A_n)\rightarrow\mathcal{R}_{2\epsilon_n}(X)$, which is a simplicial map and defines a morphism of systems $j:\mathcal{M}(X)\rightarrow \mathcal{V}(X)$ since, for every $n\in\mathbb{N}$, the diagram
$$\xymatrix@C=2cm@R=1.5cm{|\mathcal{R}_{2\epsilon_n}(A_n)|\ar[d]_{|j_n|} & |\mathcal{R}_{2\epsilon_{n+1}}(A_{n+1})|\ar[d]^{|j_{n+1}|}\ar[l]_{|p^*_{n,n+1}|}\\
|\mathcal{R}_{2\epsilon_n}(X)| & |\mathcal{R}_{2\epsilon_{n+1}}(X)|\ar[l]^{|i_{2\epsilon_n,2\epsilon_{n+1}}|}}$$ commutes, up to homotopy. Indeed, every $x\in|\mathcal{R}_{2\epsilon_n}(A_n)|$ belongs to a simplex $\sigma=\langle a_0,a_1,\ldots,a_s\rangle\in\mathcal{R}_{2\epsilon_n}(A_n)$. The images of the simplex are
\begin{eqnarray*}
 & \langle a_0,a_1,\ldots,a_s\rangle & \xmapsto{\enspace p^*_{n,n+1}\enspace}\langle b_0,b_1,\ldots,b_s\rangle\xmapsto{\enspace j_n\enspace}\langle b_0,b_1,\ldots,b_s\rangle=\sigma_1,\\
 & \langle a_0,a_1,\ldots,a_s\rangle &\xmapsto{\enspace j_{n+1}\enspace}\langle a_0,a_1,\ldots,a_s\rangle\xmapsto{\enspace i_{2\epsilon_n,2\epsilon_{n+1}}\enspace}\langle a_0,a_1,\ldots,a_s\rangle=\sigma_2,
\end{eqnarray*}
with $\enspace b_i\in q_{A_n}(a_i)$ for $i=0,1,\ldots s$, which lie on a common simplex $\sigma_1\cup\sigma_2=\langle b_0,b_1,\ldots,b_s,a_0,a_1,\ldots,a_s\rangle$ because, for every $i,j\in\{0,1,\ldots,s\}$ we have
\[\textrm{d}(a_i,b_j)\leq\textrm{d}(a_i,a_j)+\textrm{d}(a_j,b_j)< 2\epsilon_{n+1}+\gamma_n=\epsilon_n\]
which means $\sigma_1\cup\sigma_2\in\mathcal{R}_{2\epsilon_n}(X)$ and hence the maps are homotopic.
This morphism of systems is, in fact, an isomorphism. To see this, we need to use Morita's Lemma (Theorem \ref{teo:morita}). Roughly speaking, all we need is a diagonal map making the last diagram factorizing through it. So, we define, for every $n\in\mathbb{N}$, a simplicial map
\begin{eqnarray*}
g_n:\mathcal{R}_{2\epsilon_{n+1}}(X) & \longrightarrow & \mathcal{R}_{2\epsilon_n}(A_n)\\
x & \longmapsto & a\in q_{A_n}(x),
\end{eqnarray*}
which is simplicial because if $g_n(\langle x_0,x_1,\ldots,x_s\rangle)=\langle a_0,a_1,\ldots,a_s\rangle$, then, for every $i,j\in\{0,1,\ldots,s\}$, we have
\[\textrm{d}(a_i,a_j)\leq\textrm{d}(a_i,x_i)+\textrm{d}(x_i,x_j)+\textrm{d}(x_j,a_j)<2\gamma_n+2\epsilon_{n+1}<2\epsilon_n,\]
and induces a well defined homotopy class of maps since $a,a'\in q_{A_n}(x)$ implies that $\textrm{d}(a,a')<2\epsilon_n$. The realization of this simplicial map is our diagonal that makes the diagram conmutative up to homotopy.
$$\xymatrix@C=2cm@R=1.5cm{|\mathcal{R}_{2\epsilon_n}(A_n)|\ar[d]_{|j_n|} & |\mathcal{R}_{2\epsilon_{n+1}}(A_{n+1})|\ar[d]^{|j_{n+1}|}\ar[l]_{|p^*_{n,n+1}|}\\
|\mathcal{R}_{2\epsilon_n}(X)| & |\mathcal{R}_{2\epsilon_{n+1}}(X)|\ar[l]^{|i_{2\epsilon_n,2\epsilon_{n+1}}|}\ar[ul]_{|g_n|}}$$
The up-right subdiagram commutes because if
\begin{eqnarray*}
 & \langle a_0,a_1,\ldots,a_s\rangle & \xmapsto{\enspace p^*_{n,n+1}\enspace}\langle b_0,b_1,\ldots,b_s\rangle,\\
 & \langle a_0,a_1,\ldots,a_s\rangle &  \xmapsto{\enspace j_{n+1}\enspace}\langle a_0,a_1,\ldots,a_s\rangle \xmapsto{\enspace g_n\enspace}\langle b_0',b_1',\ldots,b_s'\rangle,
\end{eqnarray*}
with $b_i, b_i'\in q_{A_n}(a_i)$, then $\textrm{d}(b_i,b_j')<2\epsilon_n,$ and the two maps are homotopic. Finally, the down-left subdiagram commutes because if we write
\begin{eqnarray*}
 & \langle x_0,x_1,\ldots,x_s\rangle & \xmapsto{\enspace g_n\enspace}\langle a_0,a_1,\ldots,a_s\rangle\xmapsto{\enspace j_n\enspace}\langle a_0,a_1,\ldots,a_s\rangle,\\
 & \langle x_0,x_1,\ldots,x_s\rangle & \xmapsto{\enspace i_{2\epsilon_n,2\epsilon_{n+1}}\enspace}\langle x_0,x_1,\ldots,x_s\rangle,
\end{eqnarray*}
with $a_i\in q_{A_n}(x_i)$, then $\textrm{d}(x_i,a_j)<\epsilon_n$ and we are done.
\end{proof}



\paragraph{The \v{C}ech approximative sequence} We construct an Hpol-expansion of $X$ with nerves of coverings based on our finite approximations. Let us consider the Main Construction on the compact metric space $X$. For every $n\in\mathbb{N}$, consider the set $\textrm{B}_n=\{\textrm{B}_n(a):a\in A_n\}$ of open balls $\textrm{B}_n(a)=\textrm{B}(a,\varepsilon_n)$ with center $a$ and radius $\varepsilon_n$, which is a covering of $X$ since $A_n$ is a $\varepsilon_n$-approximation. We can consider the nerves of these coverings $\mathcal{N}(B_n)$ and define the maps \mapeo{p_{B_n,B_{n+1}}}{\mathcal{N}(B_{n+1})}{\mathcal{N}(B_n)}{B_{n+1}(a)}{B_n(b),\enspace b\in q_{A_n}(a).} This is a simplicial map. Let $\langle\textrm{B}_{n+1}(a_0),\textrm{B}_{n+1}(a_1),\ldots,\textrm{B}_{n+1}(a_s)\rangle$ be a simplex of $\mathcal{N}(B_{n+1})$, so $\textrm{B}_{n+1}(a_0)\cap\textrm{B}_{n+1}(a_1)\cap\ldots\cap\textrm{B}_{n+1}(a_s)\neq\emptyset$. Let $x\in X$ be a point of this intersection, that means $\textrm{d}(x,a_i)<\varepsilon_{n+1}$ for all $i=0,1,\ldots,s$. Let us write the image of this simplex as $\langle\textrm{B}_n(b_0),\textrm{B}_n(b_1),\ldots,\textrm{B}_n(b_s)\rangle$, with $b_i\in q_{A_n}(a_i)$ for all $i=0,1,\ldots,s$. Then, since 
\[\textrm{d}(x,b_i)\leqslant\textrm{d}(x,a_i)+\textrm{d}(a_i,b_i)<\varepsilon_{n+1}+\gamma_n<\varepsilon_n,\]
we obtain $x\in\textrm{B}_n(b_0)\cap\textrm{B}_n(b_1)\cap\ldots\cap\textrm{B}_n(b_s)\neq\emptyset$ and therefore $\langle\textrm{B}_n(b_0),\textrm{B}_n(b_1),\ldots,\textrm{B}_n(b_s)\rangle$ is a simplex of $\mathcal{N}(B_n)$. The realization of these maps $$|p_{B_n,B_{n+1}}|:|\mathcal{N}(B_{n+1})|\longrightarrow|\mathcal{N}(B_n)|$$  are up to homotopy well defined maps, since if $b,b'\in q_{A_n}(a)$, with $a\in A_{n+1}$, then $a\in\textrm{B}_n(b)\cap\textrm{B}_n(b')$ so the two different images are contiguous, and then homotopic, hence, define an Hmap. Thus we obtain the inverse sequence of polyhedra $$\mathcal{\check{C}}_A(X)=\{|\mathcal{N}(B_n)|,|p_{B_n,B_{n+1}}|\}$$ which will be called the \emph{\v{C}ech approximative sequence}. As the Alexandrov-McCord sequence, we see that
\begin{prop}
The \v{C}ech approximative sequence $\mathcal{\check{C}}_A(X)$ is an HPol-expansion of $X$.
\end{prop}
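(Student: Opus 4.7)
The plan is to mimic closely the strategy of Proposition \ref{teo:mccordexpansion}, showing that $\mathcal{\check{C}}_A(X)$ is isomorphic in pro-HPol to the classical \v{C}ech HPol-expansion $\check{\mathcal{C}}(X)=\{|\mathcal{N}(\mathcal{B}_{\varepsilon}(X))|,|i_{\varepsilon,\varepsilon'}|\}$ of $X$, where $\mathcal{B}_{\varepsilon}(X)=\{\textrm{B}(x,\varepsilon):x\in X\}$ and $i_{\varepsilon,\varepsilon'}$ is the refinement-induced simplicial map. Since the approximation scales $\{\varepsilon_n\}$ tend to zero, passing to the cofinal subsequence they determine gives an isomorphic restriction $\check{\mathcal{C}}(X)=\{|\mathcal{N}(\mathcal{B}_{\varepsilon_n}(X))|,|i_{\varepsilon_n,\varepsilon_{n+1}}|\}$, and it suffices to produce an isomorphism in pro-HPol from $\mathcal{\check{C}}_A(X)$ to this subsequence.

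First, I would define a level morphism $j\colon\mathcal{\check{C}}_A(X)\rightarrow\check{\mathcal{C}}(X)$ componentwise by the obvious subcomplex inclusion $j_n\colon\mathcal{N}(B_n)\hookrightarrow\mathcal{N}(\mathcal{B}_{\varepsilon_n}(X))$, sending a vertex $\textrm{B}_n(a)$ to the same ball viewed now as a vertex of the larger nerve. This is manifestly simplicial since a nonempty common intersection in the subcovering remains a nonempty common intersection in the full covering. The square comparing $|p_{B_n,B_{n+1}}|$ with $|i_{\varepsilon_n,\varepsilon_{n+1}}|$ via $|j_n|,|j_{n+1}|$ commutes up to homotopy by a contiguity argument identical in spirit to those already carried out in Proposition \ref{teo:mccordexpansion}: given a simplex $\langle\textrm{B}_{n+1}(a_0),\ldots,\textrm{B}_{n+1}(a_s)\rangle$ with witness point $x$ of intersection, the estimate $\textrm{d}(x,b_i)\leqslant\textrm{d}(x,a_i)+\textrm{d}(a_i,b_i)<\varepsilon_{n+1}+\gamma_n<\varepsilon_n$ (for any $b_i\in q_{A_n}(a_i)$) shows that the two composite images lie in a common simplex of $\mathcal{N}(\mathcal{B}_{\varepsilon_n}(X))$ witnessed by the same $x$.

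To conclude that $j$ is an isomorphism in pro-HPol I would apply Morita's Lemma (Theorem \ref{teo:morita}). The natural diagonal is the map
\begin{eqnarray*}
g_n\colon\mathcal{N}(\mathcal{B}_{\varepsilon_{n+1}}(X)) & \longrightarrow & \mathcal{N}(B_n)\\
\textrm{B}(x,\varepsilon_{n+1}) & \longmapsto & \textrm{B}_n(a),\enspace a\in q_{A_n}(x),
\end{eqnarray*}
whose realization is well defined up to homotopy because different choices $a,a'\in q_{A_n}(x)$ yield balls $\textrm{B}_n(a),\textrm{B}_n(a')$ that both contain $x$, hence sit in a common simplex (contiguity). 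That $g_n$ is simplicial is the step to carry out carefully: given a simplex $\langle\textrm{B}(x_0,\varepsilon_{n+1}),\ldots,\textrm{B}(x_s,\varepsilon_{n+1})\rangle$ of the codomain and $y\in\bigcap_i\textrm{B}(x_i,\varepsilon_{n+1})$, choosing $a_i\in q_{A_n}(x_i)$ gives $\textrm{d}(y,a_i)\leqslant\textrm{d}(y,x_i)+\textrm{d}(x_i,a_i)<\varepsilon_{n+1}+\gamma_n<\varepsilon_n$, so $y$ itself witnesses that $\langle\textrm{B}_n(a_0),\ldots,\textrm{B}_n(a_s)\rangle$ is a simplex of $\mathcal{N}(B_n)$. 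The two Morita triangles then commute up to homotopy by further triangle-inequality/contiguity arguments in the same vein.

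The one subtle point, and the main thing requiring care, is that simplices of a nerve encode nonempty common intersection of the covering members, which is strictly stronger than pairwise closeness of their centers. Every simpliciality and commutativity check therefore has to exhibit an explicit witness point of the intersection, rather than relying on a diameter bound; this is made possible uniformly by the adjustedness condition $\varepsilon_{n+1}<(\varepsilon_n-\gamma_n)/2$ built into the Main Construction. Once this observation is in hand the rest of the argument is formally parallel to Proposition \ref{teo:mccordexpansion}.
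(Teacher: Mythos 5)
Your argument is correct, but it takes a genuinely different route from the paper's. The paper also compares $\mathcal{\check{C}}_A(X)$ with the classical \v{C}ech expansion, but indexed by \emph{all} open coverings of $X$ ordered by refinement: it observes that the coverings $\textrm{B}_n=\{\textrm{B}(a,\varepsilon_n):a\in A_n\}$ themselves form a cofinal directed subfamily (so the restricted system $\{|\mathcal{N}(B_n)|,|g_{B_n,B_{n+1}}|\}$ is an isomorphic copy of the \v{C}ech expansion), and then notes that its terms are \emph{literally the same polyhedra} as those of $\mathcal{\check{C}}_A(X)$, with the two families of bonding maps $p_{B_n,B_{n+1}}$ and $g_{B_n,B_{n+1}}$ contiguous (both images of a simplex contain the original intersection point), hence homotopic. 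The identity level map is then an isomorphism of sequences and Morita's Lemma is never needed. You instead compare with the nerves of the much larger coverings $\{\textrm{B}(x,\varepsilon_n):x\in X\}$ by balls centered at \emph{every} point, which forces you to introduce the inclusion $j_n$, the diagonal $g_n$, and the two Morita triangles. Your checks (simpliciality and contiguity always certified by an explicit witness point, using $\varepsilon_{n+1}+\gamma_n<\varepsilon_n$) are all sound, and your closing remark that nerve simplices require a common witness rather than a diameter bound is exactly the right point of care. What your approach buys is uniformity with the Vietoris--Rips argument of Proposition \ref{teo:mccordexpansion}; what the paper's buys is brevity, since recognizing $\{\textrm{B}_n\}$ as cofinal among all coverings (a Lebesgue-number fact you would need anyway, in the form of cofinality of the $\varepsilon$-ball coverings) collapses the whole comparison to a contiguity check between two bonding maps on identical terms.
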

\begin{proof}
To show this, we need to find an isomorphism with another HPol-expansion. We will use the well known \v{C}ech expansion. This is the inverse system in HPol, 
$$\mathcal{\check{C}}(X)=\{|\mathcal{N}(U)|,|g_{U,V}|\}_{\Lambda},$$ where $\Lambda$ is the set of all open coverings of $X$, ordered by refinement, and for every pair of coverings $U,V\in\Lambda$, such that $V$ refines $U$, the Hmaps $|g_{U,V}|$ are the (up to homotopy) realizations of the simplicial maps \mapeo{g_{U,V}}{\mathcal{N}(V)}{\mathcal{N}(U)}{V_{\alpha}}{U_{\alpha}} with $V_{\alpha}\subset U_{\alpha}$. To set the isomorphism with our sequence we use a cofinal sequence of this system more suitable. First of all, we observe that the set of open coverings of $X$ consisting of $\{\textrm{B}_n\}_{\todon}$ are a cofinal directed subset of $\Lambda$. Indeed, if $a\in A_{n+1}$, for every $b\in q_{A_n}(a)$ we have that $\textrm{B}_{n+1}(a)\subset \textrm{B}_{n}(b)$, since for every $c\in\textrm{B}_{n+1}(a)$,
\begin{eqnarray*}\textrm{d}(b,c) & \leqslant & \textrm{d}(b,a)+\textrm{d}(a,c)<\gamma_n+\varepsilon_{n+1}<\varepsilon_n.
\end{eqnarray*} 
That means $\textrm{B}_{n+1}$ refines $\textrm{B}_{n}$ for every $\todon$. So, we can use this new set of indexes to define the isomorphic inverse sequence $$\mathcal{\check{C}}(X)=\{|\mathcal{N}(B_n)|,|g_{B_n,B_{n+1}}|\}.$$
Now, for every $\todon$, the maps $$p_{B_n,B_{n+1}},g_{B_n,B_{n+1}}:|\mathcal{N}(B_{n+1})|\longrightarrow|\mathcal{N}(B_{n+1})|,$$ are homotopic. If $x\in|\mathcal{N}(B_{n+1})|$ then $x$ is in contained in a unique simplex $$\sigma=\langle B_{n+1}(a_0), B_{n+1}(a_1),\ldots,B_{n+1}(a_s)\rangle$$ with $$B_{n+1}(a_0)\cap B_{n+1}(a_1)\cap\ldots\cap B_{n+1}(a_s)\neq\emptyset.$$ Let us write $$p_{B_n,B_{n+1}}(\sigma)=\langle B_{n}(b_0), B_{n}(b_1),\ldots,B_{n}(b_s)\rangle$$ where, for every $i=0,\ldots s$, $b_i\subset q_{A_n}(a_i)$. Then $B_{n+1}(a_i)\subset B_n(b_i)$, and $$g_{B_n,B_{n+1}}(\sigma)=\langle B_{n}(c_0), B_{n}(c_1),\ldots,B_{n}(c_s)\rangle$$ where, for every $i=0,\ldots s$, $B_{n+1}(a_i)\subset B_n(c_i)$. It is clear now that $p_{B_n,B_{n+1}}(\sigma)\cup g_{B_n,B_{n+1}}(\sigma)$ is simplex of $\mathcal{N}(B_{n+1})$ because $$\emptyset\neq\bigcap_{i=0}^s B_{n+1}(a_i)\subset\bigcap_{i=0}^s B_{n}(b_i)\cap\bigcap_{i=0}^s B_{n}(c_i).$$ So the maps $p_{B_n,B_{n+1}},g_{B_n,B_{n+1}}$ are contiguous and hence homotopic. Then the identity is a morphism between the inverse sequences $\mathcal{\check{C}}_A(X)$ and $\mathcal{\check{C}}(X)$, so they are isomorphic and we are done. 
\end{proof}
\paragraph{The witness approximative sequence}
The witness complex is a simplicial complex constructed over a finite set of points with nice computational properties. Its simplices are sets of points which are close enough to a point that acts as a witness for them. They do not only depend on the 1-skeleton (as the Vietoris Rips complexes) and they do not produce so high dimensional simplexes as Vietoris Rips or \v{C}ech complexes. To see the definition and some properties of these complexes, see \cite{Ctopology}. Now, we define them in our context. Let us consider the Main Construction over the compact metric space $X$. For every $\todon$, consider the simplicial complex $\mathcal{W}_n$ whose vertex set are the points of the $\varepsilon_n$-approximation $A_n$ and the simplices are sets of points $\{a_0,a_1,\ldots,a_s\}\subset A_n$ such that every subset $\{a_{i_0},\ldots,a_{i_r}\}$ satisfies that there exists an $x\in X$, the witness, such that $$\sum_{j=0}^r\textrm{d}(x,a_{i_j})<(r+1)\varepsilon_n.$$ It is clear from the definition that this is indeed a simplicial complex. Now, we want to define maps between the witness complexes asociated to different approximations in order to define a sequence of polyhedra based on witness complexes. The idea here is that these maps are defined in a way that they \comillas{preserve} the witness for each set of points.  Let us define the map \mapeo{\omega_{n,n+1}}{\mathcal{W}_{n+1}}{\mathcal{W}_n}{a}{b\in q_{A_n}(a),} which is a simplicial map. Let us suppose that the simplex $\sigma=\langle a_0,a_1,\ldots, a_s\rangle$ is mapped to $\langle b_0,b_1,\ldots, b_s\rangle$. Consider the subset $\{b_{i_0},\ldots,b_{i_r}\}$. There exists a witness $x\in X$ for the corresponding subset $\{a_{i_0},\ldots,a_{i_r}\}$ of the simplex $\sigma$ and we claim that it is also a witness for its image. So, we estimate the sum
\begin{eqnarray*}\sum_{j=0}^r\textrm{d}(x,b_{i_j}) & \leqslant & \sum_{j=0}^r\left(\textrm{d}(x,a_{i_j})+\textrm{d}(a_{i_j},b_{i_j})\right)<(r+1)\varepsilon_{n+1}+\sum_{j=0}^r\gamma_n<\\ &<&(r+1)\frac{\varepsilon_n-\gamma_n}{2}+(r+1)\gamma_n=(r+1)\frac{\varepsilon_n+\gamma_n}{2}<(r+1)\varepsilon_n
\end{eqnarray*}  
and conclude that the map is simplicial. As in previous cases, the realization of this simplicial map is a well defined Hmap: if $b,b'\in q_{A_n}(a)$, then $\textrm{d}(b,a)+\textrm{d}(b',a)<2\varepsilon_n$ (here $a$ is acting as a witness to prove that $\langle b,b'\rangle$ is a simplex in $\mathcal{W}_n$), so the two possible definitions are contiguous maps and hence they are in the same homotopic class of maps.
We obtain then an inverse sequence of polyhedra called the \emph{witness approximative sequence}
$$\mathcal{W}(X)=\{|\mathcal{W}_n|,|\omega_{n,n+1}|\}.$$
As before, we will prove
\begin{prop}
The sequence $\mathcal{W}(X)$ is an HPol-expansion of $X$.
\end{prop}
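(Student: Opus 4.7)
My plan is to use the same template as the preceding two propositions and compare $\mathcal{W}(X)$ with the classical Vietoris HPol-expansion $\mathcal{V}(X)=\{|\mathcal{R}_{2\varepsilon_n}(X)|,|i_{2\varepsilon_n,2\varepsilon_{n+1}}|\}$ of $X$, exhibiting a level morphism which is an isomorphism in pro-HPol by Morita's lemma (Theorem \ref{teo:morita}). The forward map is the canonical inclusion. Any simplex $\langle a_0,\ldots,a_s\rangle$ of $\mathcal{W}_n$ has pairwise distances strictly less than $2\varepsilon_n$ (a witness $y$ for the pair $\{a_i,a_j\}$ gives $\textrm{d}(a_i,a_j)\leqslant\textrm{d}(y,a_i)+\textrm{d}(y,a_j)<2\varepsilon_n$), so the vertex inclusion $A_n\hookrightarrow X$ extends to a simplicial map $i_n:\mathcal{W}_n\to\mathcal{R}_{2\varepsilon_n}(X)$. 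I would then verify that $\{|i_n|\}$ is a level morphism by a contiguity argument: for a simplex $\sigma=\langle a_0,\ldots,a_s\rangle$ of $\mathcal{W}_{n+1}$, the compositions $|i_n|\circ|\omega_{n,n+1}|$ and $|i_{2\varepsilon_n,2\varepsilon_{n+1}}|\circ|i_{n+1}|$ produce $\langle b_0,\ldots,b_s\rangle$ (with $b_i\in q_{A_n}(a_i)$) and $\langle a_0,\ldots,a_s\rangle$ respectively in $\mathcal{R}_{2\varepsilon_n}(X)$, and the bound $\textrm{d}(a_i,b_j)\leqslant\textrm{d}(a_i,a_j)+\textrm{d}(a_j,b_j)<2\varepsilon_{n+1}+\gamma_n<\varepsilon_n$ (together with the already established pairwise bounds) shows their union spans a common simplex.

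For the Morita diagonal, I would define, for each $n$, the simplicial map
\[ g_n:\mathcal{R}_{2\varepsilon_{n+1}}(X)\longrightarrow\mathcal{W}_n,\quad x\longmapsto a\in q_{A_n}(x). \]
To check simpliciality, given $\langle x_0,\ldots,x_s\rangle$ with $\textrm{d}(x_i,x_j)<2\varepsilon_{n+1}$ and image $\langle a_0,\ldots,a_s\rangle$ with $a_i\in q_{A_n}(x_i)$, I would use $x_{i_0}\in X$ as a witness for an arbitrary subset $\{a_{i_0},\ldots,a_{i_r}\}$, obtaining
\[
\sum_{k=0}^{r}\textrm{d}(x_{i_0},a_{i_k})\leqslant\gamma_n+r(2\varepsilon_{n+1}+\gamma_n)=(r+1)\gamma_n+2r\varepsilon_{n+1}<(r+1)\varepsilon_n,
\]
where the last inequality uses $2\varepsilon_{n+1}<\varepsilon_n-\gamma_n$ and $\gamma_n<\varepsilon_n$. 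The homotopy class of $|g_n|$ is well defined because $a,a'\in q_{A_n}(x)$ implies $x$ witnesses the edge $\langle a,a'\rangle$ in $\mathcal{W}_n$. The two Morita subtriangles then commute up to contiguity by essentially the same bookkeeping: on a simplex $\langle a_0,\ldots,a_s\rangle$ of $\mathcal{W}_{n+1}$, both $\omega_{n,n+1}$ and $g_n\circ i_{n+1}$ send each $a_i$ into $q_{A_n}(a_i)$ and the two choices are joined by an edge witnessed by $a_i\in X$; on a simplex $\langle x_0,\ldots,x_s\rangle$ of $\mathcal{R}_{2\varepsilon_{n+1}}(X)$, the mixed distances between the $x_i$ and the nearest-point images lie below $\varepsilon_n$, so the two compositions $|i_n|\circ|g_n|$ and $|i_{2\varepsilon_n,2\varepsilon_{n+1}}|$ are contiguous.

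I expect the main obstacle to be managing the witness condition, which is more delicate than the diameter or intersection conditions of the Rips and \v{C}ech cases: it is a sum over an entire subset rather than a pairwise requirement, and both $\omega_{n,n+1}$ and $g_n$ mix centers from $X$ with approximants from $A_n$. The key point is that the adjusting hypothesis $\varepsilon_{n+1}<(\varepsilon_n-\gamma_n)/2$ is precisely calibrated to absorb the $(r+1)\gamma_n+2r\varepsilon_{n+1}$ slack, so the target bound $(r+1)\varepsilon_n$ holds uniformly in the size $r+1$ of the subset. Once this estimate is in place, all remaining diagrams collapse to standard contiguity checks, and Morita's lemma upgrades the level morphism $\{|i_n|\}$ to an isomorphism $\mathcal{W}(X)\cong\mathcal{V}(X)$ in pro-HPol, whence $\mathcal{W}(X)$ is an HPol-expansion of $X$.
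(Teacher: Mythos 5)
Your proof is correct in its estimates and overall strategy, but it takes a slightly different route from the paper's. The paper compares $\mathcal{W}(X)$ with the intermediate system $\mathcal{M}(X)=\{|\mathcal{R}_{2\epsilon_n}(A_n)|,|p^*_{n,n+1}|\}$, which was already shown in the proof of Proposition \ref{teo:mccordexpansion} to be isomorphic to the Vietoris system; you instead map directly into $\mathcal{V}(X)=\{|\mathcal{R}_{2\varepsilon_n}(X)|,|i_{2\varepsilon_n,2\varepsilon_{n+1}}|\}$, so your forward maps land in Rips complexes on all of $X$ and your Morita diagonal $g_n$ is defined on all of $X$ rather than on the finite sample $A_{n+1}$. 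Both routes work: the paper's choice keeps every complex finite and reuses the isomorphism $\mathcal{M}(X)\cong\mathcal{V}(X)$ already in hand, while yours is one comparison shorter but in effect re-proves part of that earlier step inside this argument. Your key estimate for the simpliciality of $g_n$ (witness $x_{i_0}$, slack $(r+1)\gamma_n+2r\varepsilon_{n+1}<(r+1)\varepsilon_n$ absorbed by the adjusting inequality $2\varepsilon_{n+1}<\varepsilon_n-\gamma_n$) is correct and is exactly the calibration the paper exploits in its corresponding step.

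One spot is under-justified as written: for the upper-right Morita triangle you only check that, for each $i$, the two images $b_i=\omega_{n,n+1}(a_i)$ and $b_i'=g_n(i_{n+1}(a_i))$ span an edge of $\mathcal{W}_n$ witnessed by $a_i$. Contiguity of two simplicial maps into a witness complex requires the entire union $\langle b_0,\ldots,b_s,b_0',\ldots,b_s'\rangle$ to be a simplex, and for $\mathcal{W}_n$ that means \emph{every} subset of this union must admit a witness; checking edges alone does not establish this, since the witness condition is a sum over a whole subset rather than a pairwise condition (a point you yourself flag as the main obstacle). The repair is the same computation you already carried out for $g_n$: a subset $\{b_{i_0},\ldots,b_{i_{r_1}},b'_{j_0},\ldots,b'_{j_{r_2}}\}$ corresponds to a subset of the $a$'s which, being part of a simplex of $\mathcal{W}_{n+1}$, has a witness $x\in X$, and the triangle inequality together with $\varepsilon_{n+1}+\gamma_n<\varepsilon_n$ shows that the same $x$ witnesses the image subset. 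With that estimate spelled out, Morita's lemma applies and the argument is complete.
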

\begin{proof}
We will see that it is isomorphic to $\mathcal{M}(X)$. For every $\todon$, the identitity map defined on the vertices of the witness complex
\mapeo{f_n}{\mathcal{W}_n}{\mathcal{R}_{2\varepsilon_n}(A_n)}{a}{a,} is a simplicial map. Indeed, if $\sigma=\langle a_0,a_1,\ldots,a_s\rangle$ then $f_n(\sigma)=\sigma$ is a simplex in $\mathcal{R}_{2\varepsilon_n}(A_n)$ since $\textrm{diam}\{a_0,a_1,\ldots,a_s\}<2\varepsilon_n$ because for every pair $a_i,a_j\in\sigma$ there exists a point $x\in X$ such that $$\textrm{d}(a_i,a_j)\leqslant\textrm{d}(a_i,x)+\textrm{d}(x,a_j)<2\varepsilon_n.$$ The realizations of these maps are a map between the sequences $\mathcal{W}(X)$ and $\mathcal{M}(X)$ since the diagram
$$\xymatrix@C=2cm@R=1.5cm{|\mathcal{W}_n|\ar[d]_{|f_n|} & |\mathcal{W}_{n+1}|\ar[d]^{|f_{n+1}|}\ar[l]_{|\omega_{n,n+1}|}\\
|\mathcal{R}_{2\epsilon_n}(A_n)| & |\mathcal{R}_{2\epsilon_{n+1}}(A_{n+1})|\ar[l]^{|p_{n,n+1}^*|}}$$ is commutative up to homotopy. Let $x$ be a point of $|\mathcal{W}_{n+1}|$, then it belongs to a unique simplex $\sigma=\langle a_0,a_1,\ldots,a_s\rangle$ of $\mathcal{W}_{n+1}$. Let us write the images of this simplex as 
$$f_n\cdot\omega_{n,n+1}(\sigma)=\langle b_0,b_1,\ldots,b_s\rangle\enspace\text{and}\enspace p_{n,n+1}^*\cdot f_{n+1}(\sigma)=\langle c_0,c_1,\ldots,c_s\rangle,$$
where $b_i,c_i\subset q_{A_n}(a_i)$ for $i=0,1,\ldots,s$. Then $\langle b_0,b_1,\ldots,b_s, c_0,c_1,\ldots,c_s\rangle$ is a simplex of $\mathcal{R}_{2\varepsilon_n}(A_n)$ because for every pair of vertexes $b_i,c_j$ we have that
\begin{eqnarray*}\textrm{d}(b_i,c_j) & \leqslant & \textrm{d}(b_i,a_i)+\textrm{d}(a_i,a_j)+\textrm{d}(a_j,b_j)<\\ & < & 2\gamma_n+2\gamma_{n+1}<\varepsilon_n+\gamma_n<2\varepsilon_n.
\end{eqnarray*} 
So, the two compositions are contiguous, hence its realizations homotopic.
Now, in order to apply Morita's lemma, we define the following simplicial maps, for each $\todon$,\mapeo{g_n}{\mathcal{R}_{2\varepsilon_{n+1}}(A_{n+1})}{\mathcal{W}_n}{a}{b\in q_{A_n}(a),} whose realizations make the following diagram commutative, for every $\todon$:
$$\xymatrix@C=2cm@R=1.5cm{|\mathcal{W}_n|\ar[d]_{|f_n|} & |\mathcal{W}_{n+1}|\ar[d]^{|f_{n+1}|}\ar[l]_{|\omega_{n,n+1}|}\\
|\mathcal{R}_{2\epsilon_n}(A_n)| & |\mathcal{R}_{2\epsilon_{n+1}}(A_{n+1})|\ar[l]^{|p_{n,n+1}^*|}\ar[lu]_{|g_n|}}$$ 
We have to prove several facts. First of all, the map just defined is simplicial. Let $\sigma=\langle a_0,a_1,\ldots,a_s\rangle$ be a simplex of $\mathcal{R}_{2\varepsilon_{n+1}}(A_{n+1})$, and write $g_n(\sigma)=\langle b_0,b_1,\ldots,b_s\rangle$. Consider any subset $\{b_{i_0},\ldots,b_{i_r}\}$. In this case, we will not use the witness for the corresponding subset $\{ a_0,a_1,\ldots,a_s\}$. We will just use $x=a_0$ as witness. So, we evaluate the sum 
$$\sum_{j=0}^r\textrm{d}(b_{i_j},a_0)\leqslant\sum_{j=0}^r\left(\textrm{d}(a_0,a_{i_j})+\textrm{d}(a_{i_j},b_{i_j})\right)<\sum_{j=0}^r\left(2\varepsilon_{n+1}+\gamma_n\right)<(r+1)\varepsilon_n$$ 
and conclude that $g_n(\sigma)$ is a simplex of the witness complex. Again, the realization of this simplicial map is a well defined up to homotopy map, because for two different $b,b'\in q_{A_n}(a)$, $\langle b,b'\rangle$ is a simplex of $\mathcal{W}_n$. It only remains to prove that, in fact, the two triangular diagrams abobe commute up to homotopy. For the upper-right one, consider the simplex $\sigma=\langle a_0,a_1,\ldots,a_s\rangle$ of $\mathcal{W}_{n+1}$ and write for its images $\omega_{n,n+1}(\sigma)=\langle b_0,b_1,\ldots,b_s\rangle$ and $g_n\cdot f_{n+1}(\sigma)=\langle b'_0,b'_1,\ldots,b'_s\rangle.$ The union of the images, $\langle b_0,b_1,\ldots,b_s,b'_0,b'_1,\ldots,b'_s\rangle,$ is a simplex of $\mathcal{W}_n$.  The subset $\{b_{i_0},\ldots,b_{i_{r_1}},b'_{j_0},\ldots,b'_{j_{r_2}}\}$ has a corresponding one $\{a_{i_0},\ldots,a_{i_{r_1}},a_{j_0},\ldots,a_{j_{r_2}}\}$ with $x\in X$ as witness, so
$$\sum_{k=0}^{r_1}\textrm{d}(a_{i_k},x)+\sum_{k=0}^{r_2}\textrm{d}(a_{j_k},x)<(r_1+r_2+2)\varepsilon_n.$$
Then, \begin{eqnarray*}
\sum_{k=0}^{r_1}\textrm{d}(b_{i_k},x)+\sum_{k=0}^{r_1}\textrm{d}(b_{j_k},x)&\leqslant &\sum_{j=0}^{r_1}\textrm{d}(b_{i_k},a_{i_k})+\textrm{d}(a_{i_k},x)+\sum_{j=0}^{r_2}\textrm{d}(b_{j_k},a_{j_k})+\textrm{d}(a_{j_k},x)<\\&<&(r_1+r_2+2)\varepsilon_{n+1}+(r_1+r_2+2)\gamma_n<(r_1+r_2+2)\varepsilon_n.
\end{eqnarray*}
so $\omega_{n,n+1}$ and $g_n\cdot f_{n+1}$ are contiguous maps and their realizations homotopic. To prove the lower left commutativity, we consider a simplex $\sigma=\langle a_0,a_1,\ldots,a_s\rangle$ and observe that the images $f_n\cdot g_n(\sigma)=\{b_0,\ldots,b_s\}$ and $p_{n,n+1}^*(\sigma)=\{b'_0,\ldots,b'_s\}$ are in the same simplex (their union). This is so, because for any $b_i,b'_j$ in that union, we have,
$$\textrm{d}(b_i,b'_j)\leqslant\textrm{d}(b_i,a_i)+\textrm{d}(a_i,a_j)+\textrm(a_j,b'_j)<2\gamma_n+2\varepsilon_{n+1}<2\varepsilon_n,$$
and that means that the diameter of the union makes it a simplex and then the maps $f_n\cdot g_n$ and $p_{n,n+1}^*$ are contiguous.
\end{proof}
From this proof, it is readily seen that every simplex of the witness complex is indeed a simplex of the Vietoris Rips complex. But the converse is not true, so the witness complex always will have less simplexes (of less or equal dimension) than the Vietoris Rips. Moreover, with the idea of approximation of compact metric spaces in mind, it makes sense to consider a set of points a simplex only if there is a point close enough to all of them. This make its use better for simplicity and computational purposes.

\paragraph{The Dowker approximative sequence}
Let us recall the simplicial complexes defined by Dowker in \cite{Dhomology}, for a given relation on two sets. Given two sets $X$ and $Y$ and a relation between the two sets $R$, i.e., a subset of the cartesian product $R\subset X\times Y$, we define two simplicial complexes $K^X$ and $L^Y$. On the one hand, a finite subset $\sigma$ of elements of $X$ is a simplex of $K^X$ if there exists an element $y\in Y$ related with every element  $x\in\sigma$. On the other hand, a finite subset $\tau$ of elements of $Y$ is a simplex of $L^Y$ if there exists an element $x\in X$ related with every element of $y\in\sigma$. Note that there is a kind of duality in these definitions. It is readily seen that $K^X$ and $K^Y$ are simplicial complexes. In that paper it is shown that this two complexes have the same homology. Moreover it is proven the following
\begin{teo}[Dowker \cite{Dhomology}]\label{teo:dowker}
The realizations of the simplicial complexes $|K^X|$ and $|L^Y|$ have the same homotopy type.
\end{teo}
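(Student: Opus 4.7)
The plan is to upgrade the classical witnesses-and-subdivision argument: I would build simplicial maps $\phi\colon(K^X)'\to L^Y$ and $\psi\colon(L^Y)'\to K^X$ from choices of witnesses of the relation $R$, and then show, on the second barycentric subdivision, that the two compositions are contiguous (hence homotopic) to canonical maps contiguous to the identity. First, using the axiom of choice, for every simplex $\sigma\in K^X$ fix a witness $y(\sigma)\in Y$ with $x\,R\,y(\sigma)$ for all $x\in\sigma$, and dually fix $x(\tau)\in X$ for every $\tau\in L^Y$. Define $\phi$ on vertices by $\phi(\sigma)=y(\sigma)$, and $\psi$ symmetrically. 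Simpliciality of $\phi$ is straightforward: for a chain $\sigma_0\subset\cdots\subset\sigma_s$ in $(K^X)'$, any vertex $x\in\sigma_0$ lies in every $\sigma_i$ and so $x\,R\,y(\sigma_i)$ for all $i$, making $\{y(\sigma_0),\ldots,y(\sigma_s)\}$ a simplex of $L^Y$ witnessed by $x$.

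Next, I would pass to the second subdivision to form honest compositions. By the Proposition on barycentric subdivisions stated earlier, the subdivided map $\phi'\colon(K^X)''\to(L^Y)'$ realizes a map homotopic to $|\phi|$, so the simplicial composition $\psi\circ\phi'\colon(K^X)''\to K^X$ realizes a map homotopic to $|\psi|\circ|\phi|$. On a vertex $v=(\sigma_0\subset\cdots\subset\sigma_s)$ of $(K^X)''$ it returns the point $x(\{y(\sigma_0),\ldots,y(\sigma_s)\})\in X$. To compare this with the identity, I would choose any simplicial $\xi\colon(K^X)''\to K^X$ sending each $v$ to a vertex of its \emph{smallest} simplex $\sigma_0$; factoring $\xi$ through $(K^X)''\to(K^X)'\to K^X$ and applying the Proposition twice gives $|\xi|\simeq\mathrm{id}_{|K^X|}$.

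The heart of the argument is showing $\psi\circ\phi'$ and $\xi$ are contiguous. A simplex of $(K^X)''$ is a chain of chains $v^0\subset\cdots\subset v^k$ with $v^j=(\sigma_0^j\subset\cdots\subset\sigma_{s_j}^j)$, and the claim is that $y(\sigma_0^0)$ is a single common witness for all $2(k+1)$ image vertices at once. Indeed, $\sigma_0^0\in v^0\subset v^j$ equals some $\sigma_\ell^j$, which forces $\sigma_0^j\subset\sigma_0^0$; hence $\xi(v^j)\in\sigma_0^j$ is a vertex of $\sigma_0^0$ and therefore related to $y(\sigma_0^0)$, while $x(\{y(\sigma_i^j)\}_i)$ is related to $y(\sigma_\ell^j)=y(\sigma_0^0)$ by the defining property of $x(\cdot)$.

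Putting this together, $|\psi|\circ|\phi|\simeq|\psi\circ\phi'|\simeq|\xi|\simeq\mathrm{id}_{|K^X|}$, and the symmetric argument swapping the roles of $X$ and $Y$ yields $|\phi|\circ|\psi|\simeq\mathrm{id}_{|L^Y|}$, so $|\phi|$ is the desired homotopy equivalence. The main obstacle is the combinatorial witness check above: the success of the whole argument hinges on choosing $\xi$ to pick a vertex of the \emph{smallest} simplex in each chain rather than the largest one, which is precisely what lets one single witness $y(\sigma_0^0)$ serve every image simultaneously, independently of the varying heights $s_j$ of the component chains.
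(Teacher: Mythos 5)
The paper does not prove this statement at all: it is quoted verbatim from Dowker's paper \cite{Dhomology} as a known external result, so there is no in-paper proof to compare against. Your argument is a correct and complete reconstruction of Dowker's classical proof. The witness maps $\phi\colon(K^X)'\to L^Y$, $\psi\colon(L^Y)'\to K^X$ are well defined and simplicial for the reasons you give; passing to $(K^X)''$ to form the honest composite and invoking the subdivision Proposition to identify $|\psi\circ\phi'|$ with $|\psi|\circ|\phi|$ is legitimate; and the decisive contiguity check is right: since $v^0\subset v^j$ forces $\sigma_0^0$ to appear in every chain $v^j$, the single element $y(\sigma_0^0)$ witnesses both $\xi(v^j)\in\sigma_0^j\subset\sigma_0^0$ and $x(\{y(\sigma_i^j)\}_i)$ for all $j$ simultaneously. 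Your closing remark is also apt --- taking a vertex of the largest simplex of each chain would break the common-witness argument, because the maximal simplex of $v^k$ need not belong to the smaller chains --- although strictly what it blocks is this particular contiguity, not the theorem. The only cosmetic caveat is that the statement in the paper writes $L^Y$ while the surrounding text once misprints it as $K^Y$; your proof consistently uses the correct dual complex.
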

In \cite{Dhomology} this is used to prove that the \v{C}ech and Vietoris homology for general topological spaces are isomorphic. Moreover, in Shape Theory, it is used to show that the standard \v{C}ech and Vietoris systems for any topological space are isomorphic. The power of Dowker Theorem lies in the generality of its formulation. We only need two sets and a relation and we obtain two homotopical simplicial complexes.
We can reformulate it in the context of Alexandrov spaces as follows. Consider an Alexandrov $T_0$ space given by the poset $(X,\leqslant)$. Let us consider the relation $R\subset X\times X$ given by $x R y\Leftrightarrow x\leqslant y$. Then, we have the following simplicial complexes:
\begin{eqnarray*}
\sigma=\langle x_0,\ldots,x_r\rangle\in K^X & \Longleftrightarrow & \exists y\in X: x_0,\ldots,x_r\leqslant y\\
\tau=\langle y_0,\ldots,y_s\rangle\in L^X & \Longleftrightarrow & \exists x\in X: x\leqslant y_0,\ldots,y_s
\end{eqnarray*}
Recall that, for every Alexandrov $T_0$ space, we can construct the McCord complex, which, with the same notation, we can write as
$$\rho=\langle z_0,\ldots,z_p\rangle\in \mathcal{K}(X)\Longleftrightarrow z_0\leqslant\ldots\leqslant z_p.$$
\begin{obs}
As simplicial complexes, $\mathcal{K}(X)\subset K^X,L^X$.
\end{obs}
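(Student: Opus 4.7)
The plan is to verify both inclusions directly by exhibiting, for each simplex of $\mathcal{K}(X)$, an explicit witness element of $X$ certifying that the same vertex set is a simplex in $K^X$ (respectively, $L^X$). Since all three complexes share the same vertex set $X$, it suffices to check the condition on simplices.

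So fix a simplex $\rho = \langle z_0, z_1, \ldots, z_p\rangle$ of $\mathcal{K}(X)$. By definition of the McCord complex, the vertices form a chain $z_0 \leqslant z_1 \leqslant \ldots \leqslant z_p$ in the poset $(X,\leqslant)$. To show $\rho \in K^X$, I would exhibit the maximal element of the chain as witness: take $y = z_p$, so that transitivity of $\leqslant$ gives $z_i \leqslant z_p = y$ for every $i \in \{0,1,\ldots,p\}$, which is exactly the condition for $\rho$ to lie in $K^X$. Symmetrically, to show $\rho \in L^X$, I would take the minimal element $x = z_0$ as witness, so that $x = z_0 \leqslant z_i$ for every $i$, placing $\rho$ in $L^X$.

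Since this covers every simplex of $\mathcal{K}(X)$ and both $K^X$ and $L^X$ are closed under taking faces (being abstract simplicial complexes), the inclusions $\mathcal{K}(X) \subset K^X$ and $\mathcal{K}(X) \subset L^X$ follow immediately. There is no real obstacle in this argument; the only conceptual point worth underlining is the duality between the two Dowker complexes under the relation $R$ induced by $\leqslant$, which is precisely what allows the top and bottom of a chain to serve as the required witnesses on the two sides.
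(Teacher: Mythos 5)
Your argument is correct and is precisely the intended justification: the paper states this as a remark without proof, and the evident reasoning is exactly yours, namely that the top of a chain $z_0\leqslant\ldots\leqslant z_p$ witnesses membership in $K^X$ and the bottom witnesses membership in $L^X$. Nothing further is needed.
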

Now, we adapt this to our special context. Let $X$ be a compact metric space and suppose the Main Construction done. Consider, for every $\todon$, the finite $T_0$ spaces $X_n=U_{2\varepsilon_n}(A_n)$ and the relation defined above with the order given by the upper semifinite topology, i.e., $$C R D\Longleftrightarrow C\subset D.$$ The simplicial complexes $K^{X_n},L^{X_n}$ are 
\begin{eqnarray*}
\sigma=\langle C_0,\ldots,C_r\rangle\in K^{X_n} & \Longleftrightarrow & \exists D\in X_n: C_0,\ldots,C_r\subset D,\\
\tau=\langle D_0,\ldots,D_s\rangle\in L^{X_n} & \Longleftrightarrow & \exists C\in X_n: C\subset D_0,\ldots,D_s.
\end{eqnarray*}
With the same notation, the McCord complex $\mathcal{K}(X)$ is defined by
$$\rho=\langle F_0,\ldots,F_p\rangle\in \mathcal{K}(X)\Longleftrightarrow F_0\subset F_1\subset\ldots\subset F_p.$$
We would like to define inverse sequences based on this simplicial complexes. To do so, we observe that the continuous maps $p_{n,n+1}:X_{n+1}\rightarrow X_n$ can be used to define simplicial maps:
\begin{minipage}{8cm}
 \mapeo{p^K_{n,n+1}}{K^{X_{n+1}}}{K^{X_n}}{C}{p_{n,n+1}(C),}
\end{minipage}
\begin{minipage}{7cm}
\mapeo{p^L_{n,n+1}}{L^{X_{n+1}}}{L^X_n}{C}{p_{n,n+1}(C).}
\end{minipage}
\\ \\
These maps are simplicial since 
\begin{eqnarray*}
C_0,\ldots,C_r\subset D & \Longrightarrow & p_{n,n+1}(C_0),\ldots,p_{n,n+1}(C_r)\subset p_{n,n+1}(D),\\
D\subset C_0,\ldots,C_s & \Longrightarrow & p_{n,n+1}(D)\subset p_{n,n+1}(C_0),\ldots,p_{n,n+1}(C_s).
\end{eqnarray*}
This allows us to define the inverse sequences of polyhedra:
\[
\mathcal{D}_u(X)=\{|K^{X_n}|,|p^K_{n,n+1}|\},
\enspace\enspace
\mathcal{D}_l(X)=\{|L^{X_n}|,|p^L_{n,n+1}|\},
\]
called respectively the \emph{upper and lower Dowker approximative sequences}.
\begin{prop}
The upper and lower Dowker approximative sequences are HPol-expansions of $X$.
\end{prop}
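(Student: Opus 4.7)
The plan is to reduce the result to Proposition~\ref{teo:mccordexpansion} by showing that $\mathcal{D}_u(X)$ and $\mathcal{D}_l(X)$ are isomorphic in pro-HPol to the Alexandroff--McCord approximative sequence $\mathcal{K}(X)$. The remark preceding the statement already provides, for each $n\in\mathbb{N}$, natural simplicial inclusions $\iota_n^K:\mathcal{K}(X_n)\hookrightarrow K^{X_n}$ and $\iota_n^L:\mathcal{K}(X_n)\hookrightarrow L^{X_n}$ given by the identity on the common vertex set $X_n=U_{2\varepsilon_n}(A_n)$. Since the three bonding simplicial maps $\mathcal{K}(p_{n,n+1})$, $p^K_{n,n+1}$ and $p^L_{n,n+1}$ all coincide with $p_{n,n+1}$ on vertices, the relevant squares commute strictly, and the inclusions assemble into level morphisms of systems $\iota^K:\mathcal{K}(X)\to\mathcal{D}_u(X)$ and $\iota^L:\mathcal{K}(X)\to\mathcal{D}_l(X)$.

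The main step is to verify that each realization $|\iota_n^K|$ and $|\iota_n^L|$ is a homotopy equivalence. Regard $X_n$ as an Alexandroff $T_0$-space under $D\leqslant C\Longleftrightarrow D\subset C$; its minimal open neighborhoods are the principal down-sets $U_C=\{D\in X_n:D\subset C\}$, and the assignment $U_C\leftrightarrow C$ identifies $L^{X_n}$ with the nerve $\mathcal{N}(\{U_C\}_{C\in X_n})$. This cover is good: each $U_C$ has maximum $C$ and is contractible, and any nonempty intersection $\bigcap_i U_{C_i}$ consists of the subsets of $\bigcap_i C_i$, which itself lies in $X_n$ (any subset of $C_0$ has diameter $<\varepsilon_n$) and acts as a maximum, so the intersection is contractible as well. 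The nerve theorem together with the Alexandroff--McCord correspondence (Theorem~\ref{teo:AMcorrespondence}) then provide weak equivalences $|L^{X_n}|\simeq X_n\simeq|\mathcal{K}(X_n)|$ into which $|\iota_n^L|$ fits naturally, so $|\iota_n^L|$ is a weak equivalence and, by Whitehead's theorem on CW complexes, a homotopy equivalence. The argument for $|\iota_n^K|$ is dual: working with the opposite Alexandroff structure on $X_n$, the up-sets $V_C=\{D:C\subset D\}$ become the minimal open neighborhoods, $K^{X_n}$ identifies with the nerve $\mathcal{N}(\{V_C\})$, and this is again a good cover because whenever $\bigcap_i V_{C_i}$ is nonempty it has $\bigcup_i C_i\in X_n$ as minimum. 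Alternatively, Dowker's theorem (Theorem~\ref{teo:dowker}) transports the weak equivalence from $L^{X_n}$ to $K^{X_n}$.

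Finally, since each level of $\iota^K$ and $\iota^L$ is a homotopy equivalence and the squares commute strictly, both level morphisms are isomorphisms in pro-HPol: choosing homotopy inverses $h_{n+1}$ of $\iota_{n+1}$ at each level, the composites $g_n=|\mathcal{K}(p_{n,n+1})|\circ h_{n+1}$ supply the Morita diagonals required by Theorem~\ref{teo:morita}, and both triangle identities hold up to homotopy by a direct computation. Hence $\mathcal{D}_u(X)\cong\mathcal{K}(X)\cong\mathcal{D}_l(X)$ in pro-HPol, so both Dowker approximative sequences are HPol-expansions of $X$. The main obstacle is the middle step---upgrading the simplicial inclusions to honest homotopy equivalences---because their simplicial structure does not directly supply a deformation retraction, so one has to pass through the nerve theorem or Dowker's theorem to establish the required weak equivalence.
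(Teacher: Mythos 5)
Your architecture agrees with the paper's at the two ends and diverges precisely in the middle. Like the paper, you start from the levelwise simplicial inclusions $\iota_n:\mathcal{K}(X_n)\to K^{X_n}$ (resp.\ $L^{X_n}$), observe that the squares commute strictly because all three bonding maps agree on vertices, and finish by producing Morita diagonals. The paper, however, never proves that $|\iota_n|$ is a homotopy equivalence: it sidesteps that question by constructing the diagonal directly as an explicit simplicial map on a barycentric subdivision, $g_n:(K^{X_{n+1}})'\to\mathcal{K}(X_n)$, $\{C_0,\ldots,C_s\}\mapsto\bigcup_i p_{n,n+1}(C_i)$, and then verifying both triangles of Morita's lemma by contiguity computations. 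You instead prove the stronger statement that each $|\iota_n|$ is itself a homotopy equivalence and then manufacture the diagonal abstractly as $|\mathcal{K}(p_{n,n+1})|\circ h_{n+1}$; your identification of $L^{X_n}$ and $K^{X_n}$ as nerves of the covers by principal down-sets and up-sets, and the observation that $\bigcap_i C_i$ and $\bigcup_i C_i$ stay in $X_n$ by the diameter bound (so the covers are good), are correct and are exactly the right reasons. Your route is more conceptual and yields more information (a levelwise equivalence rather than only a pro-isomorphism); the paper's is more elementary and self-contained, trading insight for contiguity bookkeeping.

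The one step you should not leave as stated is the assertion that the nerve-theoretic weak equivalences are ones ``into which $|\iota_n^L|$ fits naturally.'' Two things need care here. First, the classical nerve theorem is formulated for numerable good open covers of paracompact Hausdorff spaces and does not apply verbatim to the open cover $\{U_C\}$ of the non-Hausdorff finite space $X_n$; you need a finite-space version (McCord's theorem on basis-like covers, or the nerve theorem for finite spaces in Barmak's book), or else you should run the nerve argument on the closed cover of $|\mathcal{K}(X_n)|$ by the contractible subcomplexes $|\mathcal{K}(\{E:E\subset D\})|$. Second, and more importantly, a zigzag of weak equivalences $|L^{X_n}|\simeq X_n\simeq|\mathcal{K}(X_n)|$ does not by itself make the particular map $|\iota_n^L|$ a weak equivalence; you must exhibit a commuting triangle before applying two-out-of-three. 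This is fillable: the McCord-type map $|L^{X_n}|\to X_n$ sending the open simplex of $\{D_0,\ldots,D_s\}$ to $\bigcap_i D_i$ restricts on the subcomplex $|\mathcal{K}(X_n)|$ to $\psi_{X_n}$ (a chain $D_0\subset\cdots\subset D_s$ has $\bigcap_i D_i=D_0$), so the triangle commutes strictly once that map is shown to be a weak equivalence. Note also that Dowker's theorem alone only provides an abstract homotopy equivalence $|K^{X_n}|\simeq|L^{X_n}|$; it does not show that the specific inclusion $|\iota_n^K|$ is an equivalence, so it cannot replace the dual verification for $K^{X_n}$. With these points supplied, your proof is complete.
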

\begin{proof}
We will just prove that $K^X$ is an HPol-expansion. The proof for $L^X$ is completely dual (in the sense that the proof is exactly the same but using de dual property that defines the simplices for the complexes in this case). In order to prove this, we will see that $K^X$ is isomorphic, as HPol-sequence, to the approximative Alexandroff-McCord sequence. First of all, we see that, for every $\todon$, the inclusion map $i_n$ of $\mathcal{K}(X_n)$ in $K^{X_n}$ give us a morphism between the corresponding sequences. Indeed, for every $\todon$, it is easy to see that the following diagram commutes (we just need them to commute up to homotopy, but actually both compositions are exactly the same map)
$$\xymatrix@C=2cm@R=1.5cm{|\mathcal{K}(X_n)|\ar[d]_{|i_n|} & |\mathcal{K}(X_{n+1})|\ar[l]_{|p_{n,n+1}|}\ar[d]^{|i_{n+1}|}\\
|K^{X_n}| & |K^{X_{n+1}}|\ar[l]^{|p_{n,n+1}^K|}}$$In order to apply Morita's lemma, we would like to define a (diagonal) map from $K^{X_{n+1}}$ to $\mathcal{K}(X_n)$ but it seems there is no (appropiate) simplicial map between these spaces. Alternatively, we can define a simplicial map from the barycentric subdivision, \mapeo{g_n}{(K^{X_{n+1}})'}  {\mathcal{K}(X_{n+1})}{\{C_0,\ldots,C_s\}}{\bigcup_{i=0}^s p_{n,n+1}(C_i),}where there exists $C\in X_n$, such that, $\{C_0,\ldots,C_s\}\subset C$, so we have that $$\bigcup_{i=0}^s p_{n,n+1}(C_i)\subset p_{n,n+1}(C),$$ hence the map is well defined. To see that it is simplicial, let us write some notation. Let $C^j=\{C_0^j,C_1^j,\ldots,C_j^{s_j}\}$ be a set of points $C_i^j\in X_{n+1}$, we can write a simplex of $(K^{X_{n+1}})'$ as $$\langle C^0,C^0\cup C^1,\ldots,C^0\cup C^1\cup C^r\rangle.$$ The image of this simplex by $g_n$ is 
$$\left\langle\bigcup_{i=0}^{s_0}p_{n,n+1}(C_i^0),\bigcup_{j=0}^1\bigcup_{i=0}^{s_j} p_{n,n+1}(C_i^j),\ldots,\bigcup_{j=0}^{r}\bigcup_{i=0}^{s_j} p_{n,n+1}(C_i^0)\right\rangle,$$which is a simplex of $\mathcal{K}(X_n)$ since, for every $k=0,\ldots,r-1$, $$\bigcup_{j=0}^k\bigcup_{i=0}^{s_j} p_{n,n+1}(C_i^j)\subset\bigcup_{j=0}^{k+1}\bigcup_{i=0}^{s_j} p_{n,n+1}(C_i^j).$$ 
Now we are going to prove that the realization of this map satisfies the Morita's lemma making the following diagram commutative:
$$\xymatrix@C=2cm@R=1.5cm{|\mathcal{K}(X_n)|\ar[d]_{|i_n|} & |\mathcal{K}(X_{n+1})|\ar[l]_{|p_{n,n+1}|}\ar[d]^{|i_{n+1}|}\\
|K^{X_n}| & |K^{X_{n+1}}|\ar[l]^{|p_{n,n+1}^K|}\ar[ul]_{|g_n|}}$$
In order to prove this we need to use the barycentric subdivision of our simplicial complexes and the map $\rho:K'\rightarrow K$, defined above and shown to be isomorphic to the identify, for every simplicial complex $K$. Moreover, every simplicial map $f:K\rightarrow L$ induces a simplicial map between its barycentric subdivisions $f':K'\rightarrow L'$ (see \cite{Mfinitecomplexes} for details). Using this, we define the map $$(i_{n,n+1})':\mathcal{K}'(X_{n+1})\longrightarrow (K^{X_{n+1}})'$$ which is the simplicial map induced in the barycentric subdivisions by $i_{n,n+1}$ and 
$$(p_{n,n+1})':\mathcal{K}'(X_{n+1})\longrightarrow\mathcal{K}(X_n),$$ that is the composition $p_{n,n+1}\cdot\rho$. Finally, $$(p_{n,n+1}^K)':(K^{X_{n+1}})'\longrightarrow K^{X_n}$$ is the composition $p_{n,n+1}^K\cdot\rho$. Now, we just need to show that the following diagrams of simplicial maps are contiguous:
$$\xymatrix@C=2cm@R=1.5cm{\mathcal{K}(X_n)\ar[d]_{i_n} & \mathcal{K}'(X_{n+1})\ar[l]_{(p_{n,n+1})'}\ar[d]^{(i_{n+1})'}\\
K^{X_n} & (K^{X_{n+1}})'\ar[l]^{(p_{n,n+1}^K)'}\ar[ul]_{g_n}}$$
The upper right diagram is not only contiguous but commutative. With the notation on simplexes above, if $\sigma=\langle C^0,C^0\cup C^1,\ldots,C^0\cup\ldots\cup C^r\rangle\in\mathcal{K}(X_{n+1})$ with $C_i^j\subset C_{i+1}^j$ for every $j=0,\ldots,r$ and $i=0,\ldots,s_j-1$, is a simplex of $\mathcal{K}'(X_{n+1})$, then $$i_n\cdot p_{n,n+1}'(\sigma)=\langle p_{n,n+1}(C_{s_0}^0),p_{n,n+1}(C_{s_1}^1),\ldots,p_{n,n+1}(C_{s_r}^r\rangle=({p_{n,n+1}^k})'(\sigma).$$
The lower left diagram is contiguous because if $\tau=\langle C^0,C^0\cup C^1,\ldots,C^0\cup\ldots\cup C^r\rangle$ is a simplex of $(K^{X_{n+1}})'$, then $$i_n\cdot g_n(\tau)=\langle p_{n,n+1}(C^0_{s_0}),p_{n,n+1}(C^0_{s_0}\cup C^1_{s_1}),\ldots,p_{n,n+1}(C_{s_0}^0\cup\ldots\cup C_{s_r}^r)\rangle,$$ and $$(p_{n,n+1}^K)'(\tau)=\langle p_{n,n+1}(C^0_{s_0}),p_{n,n+1}(C^1_{s_1}),\ldots,p_{n,n+1}(C_{s_r}^r)\rangle.$$ The union of both images is a simplex of $K^{X_n}$ since every vertex is contained in $p_{n,n+1}(C_{s_0}^0\cup\ldots\cup C_{s_r}^r)$. Thus the maps $i_n\cdot g_n$ and $(p_{n,n+1}^K)'$ are contiguous and hence its realizations homotopic, and we are done.
\end{proof}

All these inverse approximative sequences share the property that they are defined in terms of a sequence of adjusted finite approximations obtained by the Main Construction. Given a compact metric space $(X,\text{d})$, we will say that an inverse sequence of polyhedra $\{K_n,p_{n,n+1}\}$ is an \emph{polyhedral approximative sequence} (shortly, \textsc{pas}) of $X$, if it is any of the inverse sequences defined on this section. Now, we formalize the idea that these inverse sequences capture the shape type of the space.
\begin{teo}[General Principle]\label{teo:GP}
Every \textsc{pas} of a compact metric space $X$ is an HPol-expansion of $X$.
\end{teo}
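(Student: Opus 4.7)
The plan is to prove the theorem uniformly by exhibiting, for each PAS, an isomorphism in pro-HPol with one of the classical HPol-expansions of $X$. Every PAS $\{K_n, p_{n,n+1}\}$ shares a common structural feature that makes this feasible: at level $n$ its vertex set is determined by the finite approximation $A_n$ (the points of $A_n$, closed subsets of $A_n$, or open balls centered at points of $A_n$), its simplex predicate is a proximity condition calibrated to $\varepsilon_n$, and its bonding map $p_{n,n+1}$ is a simplicial map induced by the proximity assignment $q_{A_n}$. Accordingly, the natural target for comparison is either the Vietoris system $\mathcal{V}(X) = \{|\mathcal{R}_{2\varepsilon_n}(X)|, |i_{2\varepsilon_n, 2\varepsilon_{n+1}}|\}$ when the vertex data lies in $A_n$, or the \v{C}ech system $\check{\mathcal{C}}(X)$ when it is a family of subsets of $A_n$; both are already known to be HPol-expansions.

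My approach would proceed in three steps. First, I would define a level morphism of systems from $\{K_n\}$ into the target by taking, at each level $n$, the canonical simplicial inclusion or forgetful map (a point of $A_n$ includes into $X$, a closed subset of $A_n$ restricts to a subset of $X$, a ball around a point of $A_n$ lies inside the covering indexed by $X$) and check level-commutativity up to contiguity. Second, to apply Morita's lemma (Theorem \ref{teo:morita}), I would construct for each $n$ a diagonal simplicial map from a sufficiently refined level of the target back into $K_n$, again using $q_{A_n}$ to round objects of $X$ down to vertices in $A_n$; this diagonal is well-defined up to contiguity because any two elements of $q_{A_n}(x)$ lie within distance $2\varepsilon_n$ of each other. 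Third, I would check contiguity of the two resulting triangular subdiagrams by triangle-inequality estimates of the schematic form
\[
\textrm{d}(b_i, c_j) \leqslant \textrm{d}(b_i, a_i) + \textrm{d}(a_i, a_j) + \textrm{d}(a_j, c_j) < 2\gamma_n + 2\varepsilon_{n+1} < 2\varepsilon_n,
\]
where the decisive step is the adjustedness inequality $2\varepsilon_{n+1} < \varepsilon_n - \gamma_n$, which collapses the bound below the simplex threshold of the coarser complex.

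The main obstacle is that the simplex predicate differs in form between the constructions: pairwise diameter for Vietoris-type sequences (Alexandroff--McCord and witness), nonempty intersection of balls for \v{C}ech, and existence of a common upper or lower inclusion bound for the Dowker variants. Consequently the contiguity check has to be recomputed case by case, and for the witness and Dowker sequences the argument is delicate because the simplex predicate is not purely diameter-based but involves either an external witness point or a nested chain of inclusions. Nevertheless the combinatorial pattern is uniform: proximity-induced simplicial maps never separate simplices that were adjacent in the finer level, because the adjustedness condition is precisely tuned to absorb the error $\gamma_n + \varepsilon_{n+1}$ into the coarser threshold $\varepsilon_n$. The four propositions proved above instantiate this pattern for each type of sequence admitted by the definition of PAS, so the General Principle follows by invoking them for any admissible choice of $\{K_n, p_{n,n+1}\}$.
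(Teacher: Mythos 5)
Your proposal is correct and matches the paper's approach: since a \textsc{pas} is by definition one of the four sequences constructed in Section \ref{sec:GP}, the theorem follows immediately from the four preceding propositions, each of which establishes an isomorphism in pro-HPol with a known expansion via level morphisms, Morita's lemma, and contiguity estimates driven by the adjustedness inequality. Your summary of the common pattern (with the minor caveat that the witness and Dowker sequences are compared to the intermediate sequences $\mathcal{M}(X)$ and $\mathcal{K}(X)$ rather than directly to the classical Vietoris or \v{C}ech systems) accurately reflects how those propositions are proved.
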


\begin{cor}
Let $X$ be a compact metric space and $K=\{K_n,p_{n,n+1}\}$ a polyhedral approximative sequence of $X$. Consider the induced inverse sequence of groups $F(K)=\{F(K_n),F(p_{n,n+1})\}$, where $F$ is the singular $p$-th homology or homotopy functor (with some coefficients). Then, the inverse limit of $F(K)$ is the $p$-th \v{C}ech homology or shape group of $X$, respectively.
\end{cor}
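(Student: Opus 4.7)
The plan is to derive this corollary as a direct consequence of Theorem \ref{teo:GP} together with the definitional setup for shape-theoretic invariants laid out in the Background section. Once we know that a \textsc{pas} $K$ is an HPol-expansion of $X$, the statement reduces to the (already-recorded) fact that the $k$-th \v{C}ech homology and the $k$-th shape group of $X$ are computed as the inverse limit of the image of any HPol-expansion under the corresponding homology or homotopy functor.

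First I would invoke Theorem \ref{teo:GP} to obtain that the morphism of pro-HPol determined by $K$ is an HPol-expansion of $X$, and thus $K$ is isomorphic in pro-HPol to any other HPol-expansion of $X$, such as the \v{C}ech or Vietoris systems. Next I would apply the functor $F$ (singular $p$-th homology with coefficients in $G$, or $\pi_p$) level-wise to $K$. Since $F$ is functorial and pro-$\mathcal{C}$ is built by applying the level construction in $\mathcal{C}$, isomorphic objects in pro-HPol are sent to isomorphic objects in pro-Groups; hence the pro-group $F(K)$ is isomorphic (as an object of pro-Groups) to the pro-group $F(\mathbf{X})$ coming from any standard HPol-expansion $\mathbf{X}$ used to define $\check{H}_p(X;G)$ or the $p$-th shape group. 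Finally, I would pass to inverse limits: since the limit functor sends isomorphic pro-groups to isomorphic groups, we obtain
\[
\lim_{\leftarrow} F(K) \;\cong\; \lim_{\leftarrow} F(\mathbf{X}) \;=\; \check{H}_p(X;G) \text{ or } \check{\pi}_p(X),
\]
which is the claimed conclusion.

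There is essentially no hard step here: the genuine content has been absorbed by Theorem \ref{teo:GP}, and the well-definedness of \v{C}ech homology and shape groups across different HPol-expansions is exactly what is needed to transport the computation from the classical systems to a \textsc{pas}. The only point deserving a brief sentence is to note that although the isomorphism of $K$ with a standard HPol-expansion lives in pro-HPol (i.e., only up to homotopy at each level), this is harmless because $F$ factors through HPol and therefore depends only on the homotopy classes of the bonding maps.
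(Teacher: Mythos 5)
Your argument is correct and is exactly the route the paper intends: the corollary is stated without proof precisely because, once Theorem \ref{teo:GP} establishes that a \textsc{pas} is an HPol-expansion, the conclusion follows from the independence of \v{C}ech homology and shape groups from the choice of HPol-expansion, as recalled in the background section. Your closing remark about $F$ depending only on homotopy classes of bonding maps is a sensible clarification but adds nothing beyond what the paper already takes for granted.
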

\section{Inverse Persistent homology and its errors}\label{sec:persistenterrors}

Given a \textsc{pas}, we want to use it to compute persistence modules that can be used to detect topological festures orf the original space. The inverse persistence procedure introduced in \cite{MMreconstruction} is designed to achieve this using inverse sequences of polyhedra, relating persistent topology and shape. In this section, we define some concepts in this framework for our \textsc{pas} and study its behaviour with some examples. This procedures can be applied to pointclouds in the same way. Concretely, we will obtain homology persistence modules. The idea is to infer some information about the \v{C}ech homology of some compact metric space using finite cuts of the inverse sequence that defines it (in terms of inverse limit), in the same way that Taylor polynomials approximate non linear functions up to some error. Here, polyhedra will play the role of polynomials that approximate any compact metric space.

Let us consider, for a compact metric space $X$, a \textsc{pas} (or, in general, any HPol-expansion) $\{K_n,p_{n,n+1}\}$. Consider, for each polyhedron $K_i$, the $p$-th homology group $H_n:=H_p(K_n)$ and the induced maps $q_{n,n+1}:=(p_{n,n+1})_*$ and $q_{n,m}$ for the corresponding compositions for $n<m$. We do not specify the coefficients since we only need to use that the homology groups are abelian. In the concrete examples of this section, we use $\mathbb{Z}$.  We obtain therefore an inverse sequence of finitely generated abelian groups $\{H_n,q_{n,n+1}\}$
whose inverse limit is the $p$-th \v{C}ech homology group of $X$, written $H=\check{H}_p(X)$, with projection $q_n:=(f_n)_*$ for every $\todon$.

In these conditions, fix $\todon$ and define, for every $n<m$, the \emph{$(n,m)-th$ group of inverse persistent homology} as $$H_{n,m}:=\textrm{im}(q_{n,m})=q_{n,m}(H_m).$$ The inclusion map $i$ allows us to obtain an inverse sequence of persistent homology groups $\{H_{n,n+1},i\}$. As an inverse sequence defined by inclusion maps, the inverse limit is the intersection of all of them, $$\bigcap_{m=n+1}^\infty H_{n,m}=q_n(H).$$ The persistent group $H_{n,m}$ is a normal subgroup of $H_n$ for every $n<m$. So, we can take the quotient of groups $E_{n,m}:=\frac{H_n}{H_{n,m}}$, that we will call the \emph{$(n,m)$-th inverse persistent error}. The idea of this group is that it measures the validity of $H_n$ seen from the $H_m$ perspective. Moreover, as $H_{n,m+1}$ is also a normal subgroup of $H_{n,m}$, we therefore obtain a natural homomorphism between the quotients $$\begin{array}{llcl}g_{m,m+1}:&E_{n,m+1}&
\longrightarrow &E_{n,m}\nonumber\\&h+H_{n,m+1}&\longmapsto & h+H_{n,m}.\nonumber\end{array}$$(Or, using a different notation, $[h]_{H_{n,m+1}}
\mapsto[h]_{H_{n,m}}$). By composition, we obtain an inverse sequence of errors $$E_{n,n+1}
\xleftarrow{g_{n+1,n+2}}E_{n,n+2}\xleftarrow{g_{n+2,n+3}}\ldots\xleftarrow{g_{n+m-1,n+m}} E_{n+m,n+m+1}\xleftarrow{g_{n+m+1,n+m+2}}
\ldots,$$ with an inverse limit, denoted by $E_n^i$ and called the \emph{inductive $n$-error}. In an ideal of understanding this error in the infinity we 
define the \emph{$n$-th real error}, $E_n:=\frac{H_n}{q_n(H)}$. In general, these two errors in the limit we have just defined, $E_n^i$ and $E_n$,  are different, but we can do some comparations. 
\begin{prop}There is an injective homomorphism of groups $\varphi:E_n\rightarrow E_n^i$.
\end{prop}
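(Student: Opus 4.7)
The plan is to define $\varphi$ via the universal property of the inverse limit $E_n^i$ and then identify its kernel with $q_n(H)$, after which injectivity is automatic. For each $m > n$, let $\pi_m : H_n \to E_{n,m} = H_n/H_{n,m}$ denote the canonical projection. Since the bonding map factors as $q_{n,m+1} = q_{n,m} \circ q_{m,m+1}$, we have $H_{n,m+1} = q_{n,m}(q_{m,m+1}(H_{m+1})) \subseteq q_{n,m}(H_m) = H_{n,m}$, so the bonding homomorphism $g_{m,m+1}$ of the inverse error system satisfies $g_{m,m+1} \circ \pi_{m+1} = \pi_m$ directly from its definition $h + H_{n,m+1} \mapsto h + H_{n,m}$. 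Thus the family $\{\pi_m\}_{m>n}$ is compatible with the inverse system $\{E_{n,m}, g_{m,m+1}\}$, and by the universal property of inverse limits it induces a unique group homomorphism $\Pi : H_n \to E_n^i$ sending each $h$ to the coherent tuple $\bigl(h + H_{n,m}\bigr)_{m > n}$.

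Next I would compute the kernel of $\Pi$ and factor through $E_n$. An element $h \in H_n$ lies in $\ker \Pi$ exactly when $\pi_m(h) = 0$ for every $m > n$, i.e., when $h \in \bigcap_{m > n} H_{n,m}$. This intersection has already been identified in the excerpt with $q_n(H)$, using that the inverse limit of an inverse sequence of inclusions is their intersection. Hence $\ker \Pi = q_n(H)$, and $\Pi$ descends through the canonical quotient $H_n \to H_n/q_n(H) = E_n$ to the desired homomorphism $\varphi : E_n \to E_n^i$. Injectivity of $\varphi$ is then immediate from the first isomorphism theorem applied to $\Pi$.

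The main thing to watch is simply the well-definedness and coherence of $\Pi$; there is no substantive obstacle, since the only real input is the identification $\bigcap_{m > n} H_{n,m} = q_n(H)$, which was established just before the statement. It is worth emphasizing that we do not claim $\varphi$ is surjective: the gap between $E_n$ and $E_n^i$ would be controlled by a Mittag--Leffler type condition on the nested sequence $\{H_{n,m}\}_{m > n}$ (and corresponds to the $\lim^1$ obstruction), but this is not needed for the injective comparison asserted here.
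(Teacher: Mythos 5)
Your proof is correct and is essentially the paper's argument: both construct the same map $h+q_n(H)\mapsto (h+H_{n,m})_{m>n}$ and reduce injectivity to the identification $\bigcap_{m>n}H_{n,m}=q_n(H)$. Routing it through the universal property of the inverse limit and the first isomorphism theorem is only a repackaging of the paper's direct check of well-definedness and injectivity on cosets.
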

\begin{proof}
We define the map with $$h+q_n(H)\longmapsto (h+H_{n,m},h+H_{n,m+1},\ldots).$$ It is well defined, because if $h-h^\prime\in q_n(H)=\bigcap H_{n,m}$ then $h-h^\prime$ represents the null class in every group $E_{n,m}$.
It is injective because if $$(h_1+H_{n,m}, h_1+H_{n,m+1},\ldots)=(h_2+H_{n,m}, h_2+H_{n,m+1},\ldots)$$ then $h_1-h_2\in H_{n,m},\enspace h_1-h_2\in H_{n,m+1},\ldots$, so
$h_1-h_2\in\bigcap H_{n,m}$ and $h_1+q_n(H)=h_2+q_n(H)\qedhere$
\end{proof}
Moreover, for some kind of spaces, this two errors are actually the same group.
\begin{prop}
Consider an HPol-expansion $\{K_n,p_{n,n+1}\}$ of a compact metric movable space $X$. Then we have $E_n^i=E_n$ for every $n\in\mathbb{N}$.
\end{prop}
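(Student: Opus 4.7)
The plan is to use movability of $X$ to force the images $H_{n,m}$ to stabilize for $m$ large, and then observe that the stabilized quotient simultaneously coincides with $E_n^i$ and with $E_n$. Since the homomorphism $\varphi$ produced in the previous proposition is already known to be injective, the task reduces to proving surjectivity, which in turn boils down to the equality $q_n(H)=\bigcap_{m>n}H_{n,m}$.

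First I would invoke the facts recalled in Section \ref{sec:tools}: a movable compact metric space admits movable HPol-expansions, and the $p$-th homology functor preserves movability, so the inverse sequence of abelian groups $\{H_n,q_{n,n+1}\}$ is movable and therefore has the Mittag-Leffler property. Concretely, for every $n\in\mathbb{N}$ there exists $n'\geqslant n$ (an \textsc{ml}-index for $n$) such that $H_{n,m}=H_{n,n'}$ for every $m\geqslant n'$. I would then use this stabilization on both sides of the desired equality. On the error side, for $m\geqslant n'$ all the groups $E_{n,m}=H_n/H_{n,m}$ coincide with $H_n/H_{n,n'}$ and the bonding maps $g_{m,m+1}$ become the identity, so a cofinal subsequence of the defining inverse system of $E_n^i$ is constant; hence $E_n^i=H_n/H_{n,n'}$. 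On the real-error side, the inclusion $q_n(H)\subseteq H_{n,m}$ is automatic for every $m$, hence $q_n(H)\subseteq H_{n,n'}$.

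The remaining ingredient, and the one I expect to require the most care, is the reverse inclusion $H_{n,n'}\subseteq q_n(H)$. For this I would invoke the standard consequence of Mittag-Leffler for countable inverse sequences of groups: the projection $q_n\colon H\to H_n$ from the inverse limit has image exactly the stable image $\bigcap_m H_{n,m}$. The argument is a diagonal lifting, choosing for each level $k\geqslant n$ an \textsc{ml}-index $k'$ and inductively picking compatible preimages $h_k\in H_k$ of a prescribed $h\in H_{n,n'}$ by exploiting that every element lying in a stabilized image can be pulled back arbitrarily far down the tower; the stabilization at each stage is precisely what allows the choices to be made consistently so as to determine an element of $H$. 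Once this is in place, $q_n(H)=H_{n,n'}$, and therefore $E_n=H_n/q_n(H)=H_n/H_{n,n'}=E_n^i$; the injective $\varphi$ is then automatically an isomorphism, which yields the statement.
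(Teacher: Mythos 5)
Your proposal is correct and follows essentially the same route as the paper: movability of $X$ gives the Mittag--Leffler property for the induced homology sequence, the images $H_{n,m}$ stabilize at an \textsc{ml} index, and both $E_n^i$ and $E_n$ collapse to the quotient of $H_n$ by that stable image. You are in fact more careful than the paper at the one delicate point, the identity $q_n(H)=\bigcap_{m>n}H_{n,m}$, which the paper asserts without further argument but which, as you correctly observe, is precisely where the Mittag--Leffler lifting is needed.
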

\begin{proof}
If $X$ is movable, then, the inverse sequence that defines it as inverse limit is also movable, so it is its induced homology sequence. So, the last has the Mittag-Leffer property, that is: for every $n\in\mathbb{N}$, there exists $m\geqslant n$ such that, for every $r\geqslant m$, we have that $q_{n,r}(H_r)=q_{n,m}(H_m)$, i.e., expressed with persistent homology groups, $H_{n,r}=H_{n,m}$. So, from $m$, all the persistent groups are the same, so $q_n(H)=\bigcap H_{n,m}=H_{n,m}$. Therefore, the inverse sequence of errors from $m$ is constant and equal to $E_{n,m}=E_n=E_n^i$.
\end{proof}
Sometimes we can define a third kind of error. If there exists a homomorphism of groups $f:H_n\rightarrow H_n$
such that $f(H_{n,m})\subset H_{n,m+1}$, we can define the map $$\begin{array}{llcl}l_{m,m+1}:&E_{n,m}&
\longrightarrow &E_{n,m+1}\nonumber\\&h+H_{n,m}&\longmapsto & f(h)+H_{n,m+1}.\nonumber\end{array}$$ This map is well defined because if $h-h^\prime\in H_{n,m}$, we get $f(h)-f(h^\prime)=f(h-h^\prime)\in H_{n,m+1}$. By composition, we can form the direct sequence $$E_{n,n+1}
\xrightarrow{l_{n+1,n+2}}E_{n,n+2}\xrightarrow{l_{n+2,n+3}}\ldots\xrightarrow{l_{n+m-1,n+m}} E_{n+m,n+m+1}\xrightarrow{l_{n+m+1,n+m+2}}
\ldots,$$ and then, a direct limit, denoted by $E_n^d$ and called the \emph{direct $n$-error}. But not every inverse sequence satisfies this property. 
\begin{obs}For \textsc{pas} with constant bonding map $\{K_n,p\}$, there exists the limit $E_n^d$. In this particular case, the map $p^*:H_n\rightarrow H_n$ induced in the homology groups plays the role of $f$, because
$$p^*(H_{n,m})=p^*p_{n,m}^*(H_m)=p^*p^*_{n+1,m+1}(H_{m+1})=p^*_{n,m+1}(H_{m+1})=H_{n,m+1}.$$
\end{obs}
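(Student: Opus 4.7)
The plan is to verify that, under the constant-bonding-map hypothesis, the map $f = p^*$ (the homomorphism induced on homology by the single bonding map $p$) satisfies the compatibility $f(H_{n,m}) \subseteq H_{n,m+1}$ required to define the transition maps $l_{m,m+1}$, so that the direct limit $E_n^d$ of the sequence $\{E_{n,m}\}$ exists.

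First I would unfold the definitions. Since the bonding map is constant, write $p$ for $p_{n,n+1}$ independently of $n$, so that $p_{n,m} = p^{m-n}$ (the $(m-n)$-fold composition) and, at the level of the $p$-th homology group, $p_{n,m}^{*} = (p^{*})^{m-n}$. In particular, all the groups $H_{n}$ can be identified with the single group $H = H_{p}(K)$, and $H_{n,m} = (p^{*})^{m-n}(H)$ is a descending chain of subgroups of $H$ as $m$ increases.

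Next comes the key computation. Applying $p^{*}$ to $H_{n,m}$ gives
\[
p^{*}(H_{n,m}) \;=\; p^{*}\!\bigl((p^{*})^{m-n}(H)\bigr) \;=\; (p^{*})^{m-n+1}(H) \;=\; H_{n,m+1},
\]
which shows $p^{*}(H_{n,m}) \subseteq H_{n,m+1}$. Hence $f := p^{*}$ plays the role demanded in the preceding paragraph of the paper: the formula $h + H_{n,m} \longmapsto p^{*}(h) + H_{n,m+1}$ defines a homomorphism $l_{m,m+1}\colon E_{n,m} \to E_{n,m+1}$, and taking the direct limit of the resulting direct system of quotients produces $E_n^{d}$.

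There is no real obstacle beyond bookkeeping with the indices. The only point worth flagging is that the shift $n \mapsto n+1$ inside the composition is harmless precisely because the bonding map is the same $p$ at every level, so $p_{n,m}^{*} = p_{n+1,m+1}^{*}$; this is exactly what makes the chain of equalities stated in the observation collapse to $H_{n,m+1}$. For an inverse system with genuinely varying bonding maps, no single endomorphism of $H_{n}$ would a priori intertwine all the inclusions $H_{n,m+1} \subseteq H_{n,m}$, which is the structural reason why the constancy hypothesis is essential here.
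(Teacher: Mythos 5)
Your proposal is correct and follows essentially the same route as the paper: both arguments reduce to the observation that, with a constant bonding map, $p_{n,m}^{*}$ is the $(m-n)$-fold iterate of the single endomorphism $p^{*}$ of the fixed group $H$, so that applying $p^{*}$ to $H_{n,m}$ yields exactly $H_{n,m+1}$ (the paper phrases this as a chain of index-shifted equalities, you as a computation with powers, but the content is identical). Your closing remark on why constancy is essential is a correct and slightly more explicit elaboration of what the paper leaves implicit.
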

\begin{ej}The dyadic solenoid. This space can be defined as the inverse limit of the inverse sequence $\{\mathbb{S}^1,2\}$, considering $\mathbb{S}^1$ as the complex unit circle and the map \comillas{2} means the exponential map $z\mapsto z^2$. The induced homology inverse sequence of order 1 is $\{\mathbb{S}^1,2\}$, with inverse limit $\{0\}$, meaning that the \v{C}ech homology of the dyadic solenoid is trivial. If we consider here our errors, we obtain that for every $n<m$, the persistent homology groups are $H_{n,m}\simeq(2^{m-n})\mathbb{Z}$ (integer multiple of $2^{m-n}$). If we consider the quotients to obtain the $(n,m)$-errors, we obtain the finite groups $E_{n,m}\simeq\frac{\mathbb{Z}}{(2^{m-n})\mathbb{Z}}\simeq\mathbb{Z}_{2^{m-n}}$. The natural map between these errors sends each element to its class in the image group. Hence, the maps are
$$\begin{array}{lrcl}g_{m,m+1}:&\mathbb{Z}_{2^{m+1-n}}&
\longrightarrow &\mathbb{Z}_{2^{m+-n}}\nonumber\\&t&\longmapsto & t(\text{mod} 2).\nonumber\end{array}$$ With these maps, we can define the inverse sequence
$$\mathbb{Z}_2\longleftarrow\mathbb{Z}_4\longleftarrow\mathbb{Z}_8\longleftarrow\ldots\longleftarrow\mathbb{Z}_{2^n}\longleftarrow\ldots$$
which inverse limit $E_n^i\simeq\mathcal{D}_2$ is the dyadic integers group. In this case, we can also obtain the direct sequence construction. Here, the maps will send each element to the class of this element multiplied by two. That is, the maps are
$$\begin{array}{lrcl}l_{m,m+1}:&\mathbb{Z}_{2^{m+-n}}&
\longrightarrow &\mathbb{Z}_{2^{m+1-n}}\nonumber\\&t&\longmapsto & 2t.\nonumber\end{array}$$
So, the direct sequence $$\mathbb{Z}_2\longrightarrow\mathbb{Z}_4\longrightarrow\mathbb{Z}_8\longrightarrow\ldots\longrightarrow\mathbb{Z}_n
\longrightarrow\ldots$$ has direct limit the Prüfer 2-group $E_n^d\simeq\mathbb{Z}(2^{\infty})$ (the set of roots of the unity of some power of two). It turns out that the Prüfer group (with the discrete topology) is the Pontryagin dual of the compact group of the dyadic integers. 
\end{ej}
\begin{preg}
For what class of spaces can we obtain this duality of the errors and what is the meaning?
\end{preg}
\section{The Computational Hawaiian Earring}\label{sec:computationalhawaiianearring}
We now perform the Main Construction for a movable space called \emph{the Hawaiian Earring}, which is the subspace of $\mathbb{R}^2$,
$$\bigcup_{\todon\cup\{0\}}\bigcirc\left(\left(\frac{1}{2^n},\frac{1}{2}\right),\frac{1}{2^n}\right),$$
where $\bigcirc(a,\varepsilon)$ stands for the 1-sphere of center $a$ and radius $\varepsilon$ in $\mathbb{R}^2$. For computational reasons we are going to use not an homeomorphic copy of the Hawaiian Earring but an homotopic (and, hence, with the same shape) one. In this case, we consider the space (see Figure \ref{hawaiidef})
$$\mathcal{H}=\bigcup_{\todon\cup\{0\}}\square\left(\frac{1}{2^n},\frac{1}{2^n}\right),$$where $\square(a,b)$ stands for the square with diagonal the segment from $(0,0)$ to $(a,b)$ in $\mathbb{R}^2$.
\begin{figure}[h!]
\begin{minipage}[c]{0.45\textwidth}
\begin{center}
\begin{tikzpicture}[scale=3,domain=0:1]
\draw (1,0) circle (1cm);
\draw (0.5,0) circle (0.5cm);
\draw (0.25,0) circle (0.25cm);
\draw (1/8,0) circle (0.125cm);
\draw (1/16,0) circle (0.0625cm);
\draw (1/32,0) circle (0.03125cm);
\draw (1/64,0) circle (0.015625cm);
\end{tikzpicture}
\end{center}
\end{minipage}
\   \ \hfill
\begin{minipage}[c]{0.45\textwidth}
\begin{center}
\begin{tikzpicture}[scale=5,domain=0:1]
\draw (0,0)--(1,0)--(1,1)--(0,1)--cycle;
\draw (0,0.5)--(0.5,0.5)--(0.5,0);
\draw (0,0.25)--(0.25,0.25)--(0.25,0);
\draw (0,0.125)--(0.125,0.125)--(0.125,0);
\draw (0,1/16)--(1/16,1/16)--(1/16,0);
\draw (0,1/32)--(1/32,1/32)--(1/32,0);
\draw (0,1/64)--(1/64,1/64)--(1/64,0);
\end{tikzpicture}
\end{center}
\end{minipage}
\caption{The Hawaiian Earring and the Computational Hawaiian Earring.}
\label{hawaiidef}
\end{figure}
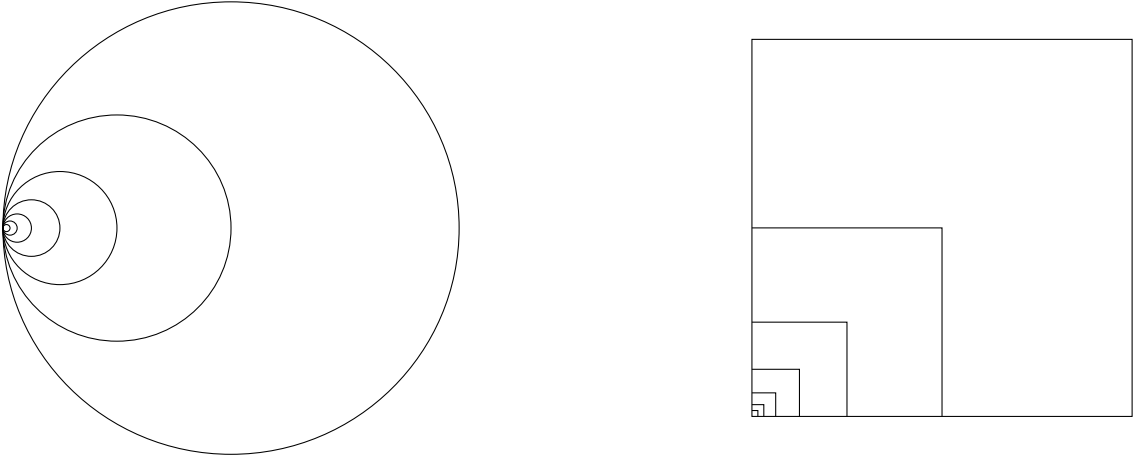

In order to perform the Main Construction, we consider points for each  approximation as the intersection of a grid of the corresponding side with the space, and add points where necessary. We pick $\varepsilon_1=2\sqrt{2}>\text{diam}(\mathcal{H})$ and then $A_1=\{(0,0)\}$. Obviously $\gamma_1=\sqrt{2}$ so we select $\epsilon_2=\frac{\sqrt{2}}{2^3}<\frac{\sqrt{2}}{2}$. We intersect a grid  $G_2$ of side $\frac{1}{2^2}$ with $\mathcal{H}$ (see Figure \ref{HEpol}) and adding the point $\left(\frac{1}{2^3},\frac{1}{2^3}\right)$, we obtain $A_2$, an $\epsilon_2$-approximation of $\mathcal{H}$. From the picture, $\gamma_2=\frac{1}{2^3}$ and then we can take $\epsilon_3=\frac{\sqrt{2}}{2^6}$. Intersecting a grid $G_3$ of side $\frac{1}{2^5}$ with $\mathcal{H}$ and adding the point $\left(\frac{1}{2^6},\frac{1}{2^6}\right)$, we obtain (see Figure \ref{HEpol}) an $\epsilon_3$-approximation $A_3$ of $\mathcal{H}$. 

By induction, we obtain that, for every $n>1$, the finite approximations are defined by
$$\epsilon_n=\frac{\sqrt{2}}{2^{3n-3}},\enspace, A_n=\left(G_n\cap\mathcal{H}\right)\cup\conjunto{\punto{\frac{1}{2^{3n-3}}}{\frac{1}{2^{3n-3}}}},\enspace\gamma_n=\frac{1}{2^{3n-3}}.$$

We therefore obtain the sequence of finite spaces from these approximations. Now we construct the Alexandroff-McCord \textsc{pas}. In the first step we have only a point as finite space, so the associated polyhedron is just a vertex. For the second and third step, we have depicted in Figure \ref{HEpol} the realization of the simplicial complexes $\mathcal{R}_{2\epsilon_2}(A_2)$ and $\mathcal{R}_{2\epsilon_3}(A_3)$.
\begin{figure}[h!]
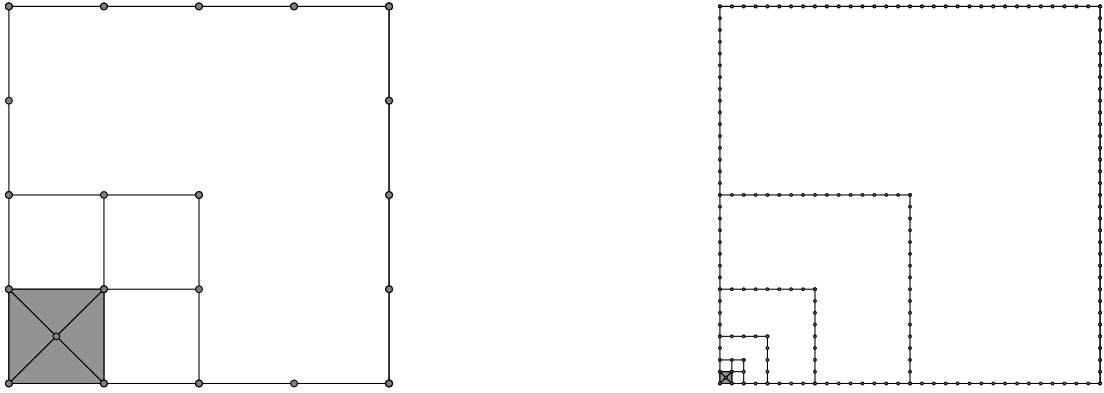

\begin{minipage}[c]{0.45\textwidth}
\begin{center}
\hawaiiPol{3}
\end{center}
\end{minipage}
\   \ \hfill
\begin{minipage}[c]{0.45\textwidth}
\begin{center}
\hawaiiPol{6}
\end{center}
\end{minipage}
\caption{The realization of the simplicial complexes $\mathcal{R}_{2\epsilon_2}(A_2)$ and $\mathcal{R}_{2\epsilon_3}(A_3)$.}
\label{HEpol}
\end{figure}
Then, the corresponding associated simplicial complexes are exactly the barycentric subdivisons of these complexes, that is, $\mathcal{K}\left(U_{2\epsilon_2}(A_2)\right)=\mathcal{R}'_{2\epsilon_2}(A_2)$ and $\mathcal{K}\left(U_{2\epsilon_3}(A_3)\right)=\mathcal{R}'_{2\epsilon_3}(A_3)$, respectively. We see that they consist of ()from north east to south west) a series of, let us say, inverted L's (one in $\mathcal{K}\left(U_{2\epsilon_2}(A_2)\right)$ and four in $\mathcal{K}\left(U_{2\epsilon_3}(A_3)\right)$), one more inverted L formed by three squares and one more square, filled. It is easy to see that, in each step, we just insert three more inverted L's and reduce the size of the three squares forming the inverted L and of the filled one. So, we can infer that, in general, the associated McCord polyhedron $\mathcal{K}\left(U_{2\epsilon_n}(A_n)\right)$ has $3n-5$ inverted L's, three squares forming an inverted L and one filled square. As before, the maps can be known just overlapping the polyhedra to see where each vertex is sent. But this will be better understood studying the homological situation.

We study homology where it makes sense, so here we just focus in homology of dimension 1. So, first of all, we give names to the generators. In the simplicial complex associated to the $\epsilon_i$-approximation, $\mathcal{K}(U_{2\epsilon_i}(A_i))$, let us call $\rho^j_i$ to the homology generator representing the $j$-th inverted L, counting from north east to south west, and $\lambda_i$, $\mu_i$, $\nu_i$ to the three squares, counted clockwise. Now, in the first non trivial case, the homology is $H_1\left(\mathcal{K}(U_{2\epsilon_2}(A_2))\right)\simeq\mathbb{Z}^4$ with generators $\rho^1_2,\lambda_2,\mu_2,\nu_2$. The next one is $H_1\left(\mathcal{K}(U_{2\epsilon_3}(A_3))\right)\simeq\mathbb{Z}^7$ with generators $\rho^i_3,\enspace\textrm{with}\enspace i=1,\ldots,4,\enspace\lambda_3,\mu_3,\nu_3$. The induced map $$(p_{2,3})_*:H_1(\mathcal{K}(U_{2\epsilon_3}(A_3)))\longrightarrow H_1(\mathcal{K}(U_{2\epsilon_2}(A_2))),$$ sends
\begin{align*}
\rho^1_3&\longmapsto \rho^1_2\\
\rho^2_3&\longmapsto \lambda_2+\mu_2+\nu_2\\
\rho^3_3,\rho^4_3,\lambda_3,\mu_3,\nu_3&\longmapsto 0.
\end{align*}
Interpreting this, we see that, in the third approximation, three \comillas{new} cycles are created, one is sent to the sum of the three squares, and the other two are sent to zero. This is repeated along all the sequence. For any $\todon$, we have that the first homology group is $H_1\left(\mathcal{K}(U_{2\epsilon_n}(A_{n}))\right)\simeq\mathbb{Z}^{3n-2}$, with generators $\rho^i_n,\enspace\textrm{with}\enspace i=1,\ldots,3n-5,\enspace\lambda_n,\mu_n,\nu_n$. The map $$(p_{n,n+1})_*:H_1(\mathcal{K}(U_{2\epsilon_{n+1}}(A_{n+1})))\longrightarrow H_1(\mathcal{K}(U_{2\epsilon_n}(A_n))),$$ acts sending
\begin{align*}
\rho^1_{n+1}&\longmapsto \rho^1_n\\
&\vdots\\
\rho^{3n-5}_{n+1}&\longmapsto \rho^{3n-5}_{n}\\
\rho^{3n-4}_{n+1}&\longmapsto \lambda_n+\mu_n+\nu_n\\
\rho^{3n-3}_{n+1},\rho^{3n-2}_{n+1},\lambda_{n+1},\mu_{n+1},\nu_{n+1}&\longmapsto 0.
\end{align*}
It is easy to understand the beaviour of the sequence. In each step, 3 \comillas{new} cycles are created, and they have an unique preimage in every further step. So the inverse limit of the sequence is $$\lim_{\leftarrow}\left\lbrace H_1\left(\mathcal{K}(U_{2\epsilon_n}(A_{n}))\right),(p_{n,n+1})_*\right\rbrace\simeq\mathbb{Z}^{\infty}$$ and, by Theorem \ref{teo:mccordexpansion}, the \v{C}ech homology of the space $\mathcal{H}$ is, $\check{H}_1(\mathcal{H})\simeq\mathbb{Z}^{\infty}$, as we already knew.

\paragraph{Persistent errors in the computational Hawaiian Earring}
The Hawaiian earring is a movable not stable space. As a movable space, the inverse sequence $\left\lbrace\mathcal{K}_n,(p_{n,n+1})_*\right\rbrace$ has the \textsc{ml} property. An easy induction tells us that, for every $m>n$, $\textrm{im}\parentesis{p_{n,m}}_*\simeq\mathbb{Z}^{3n-4}$, so, the \textsc{ml} index for every $\todon$ is, again, $n+1$. Let us compute the persistent errors. The $(n,m)$-th inverse persistent homology group, for $1<n<m$, is $H_{n,m}\simeq\mathbb{Z}^{3n-4}$, so we can compute the $(n,m)$-th persistent error $$E_{n,m}=\frac{H_n}{H_{n,m}}\simeq\frac{\mathbb{Z}^{3n-2}}{\mathbb{Z}^{3n-4}}\simeq\mathbb{Z}^2.$$
This two copies of $\mathbb{Z}$ represent the fact that, for every $\todon$, all the generators of the group $H_1(\mathcal{K}_n)$, but two, have one (and only one) preimage in every $H_1(\mathcal{K}_n)$. Considering $$\conjunto{\rho_n^1,\ldots,\rho_n^{3n-5},\lambda_n+\mu_n+\nu_n,\mu_n,\nu_n}$$ as generators, the last two are the ones without preimage. The map induced in $E_{n,m}$ is clearly the identity map, $$g_{m,m+1}=\textrm{id}:\mathbb{Z}^{2}\longrightarrow\mathbb{Z}^{2}$$ and the inverse limit of the inverse sequence $\conjunto{E_{n,m},g_{n+m,n+m+1}}$ is $E_n^i\simeq\mathbb{Z}^{2}$. Since $\mathcal{H}$ is movable, this error is equal to the $n$-th real error, $$E_n=\frac{H_n}{(p_n)_*(\check{H}_1(\mathcal{H}))}\simeq\mathbb{Z}^{2}.$$
Hence, we conclude that our inverse sequence creates two unnecessary generators in each step to define the Hawaiian Earring, as we knoww from the classical definition of it by inverse limits.

\paragraph{Acknowledgements}
The author wish to thank the valuable help, comments and support from his thesis advisor M.A. Morón for the results obtained in this paper.
\addcontentsline{toc}{chapter}{References}
\bibliographystyle{siam}
\bibliography{bibPolexp}
\end{document}